\newcommand{\cv}{\vec c}
\newcommand\supsetsim{\mathrel{
  \ooalign{\raise0.2ex\hbox{$\supset$}\cr\hidewidth\raise-0.8ex\hbox{\scalebox{0.9}{$\sim$}}\hidewidth\cr}}}
\newtheorem{Theorem}{Theorem}
\newtheorem{maintheorem}{Theorem}
\newtheorem{T}{Theorem}[section]
\newtheorem{Corollary}[T]{Corollary}
\newtheorem{Proposition}[T]{Proposition}
\newtheorem*{prop}{Proposition}
\newtheorem{Lemma}[T]{Lemma}
\newtheorem{Remark}[T]{Remark}
\newtheorem{Claim}{Claim}
\def \BB {{\mathbb B}}
\def \RR {{\mathbb R}}
\def \ZZ {{\mathbb Z}}
\def \NN {{\mathbb N}}
\def \XX {{\mathbb X}}
\def \cm {\mathcal{M}}
\def \cp {\mathcal{P}}
\def \cc {\mathcal{C}}
\def \cu {\mathcal{U}}
\def \co {\mathcal{O}}
\def \cv {\mathcal{V}}
\def \cw {\mathcal{W}}
\newcommand{\per}{\operatorname{Per}}
\newcommand{\interior}{\operatorname{interior}}
\newcommand{\fix}{\operatorname{Fix}}
\newcommand{\diam}{\operatorname{diameter}}
\begin{document}


\author{V. Pinheiro}
\address{Instituto de Matematica - UFBA, Av. Ademar de Barros, s/n,
40170-110 Salvador Bahia}
\email{viltonj@ufba.br}


\date{\today}

\title{Topological and statistical attractors for interval maps}

\maketitle

\begin{abstract}
We use the concept of Baire Ergodicity and Ergodic Formalism, introduced in \cite{Pi21}, to study topological and statistical attractors for interval maps, even with discontinuities.
For that we also analyze the {\em wandering intervals attractors}. As a result, we establish the finiteness of the non-periodic attractors for maps $C^2$ with discontinuities.

For $C^2$ interval maps without discontinuities, we show the coincidence of the statistical attractors  with the topological ones and we calculate  the upper Birkhoff averages of continuous functions for generic points, even when the map has abundance of historical behavior.
\end{abstract}
\tableofcontents

\section{Introduction}

Since Poincaré and Denjoy, the existence of wandering intervals has been a central and delicate problem in One Dimensional  Dynamics.
In particular, a difficulty to the understanding of the asymptotical behavior of the orbit of most points and to prove the finiteness of attractors.
In \cite{Gu} Guckenheimer proves that non-flat S-unimodal maps does not admit wandering interval and that such a maps has a unique topological attractor.

The results of Guckenheimer ware generalized by Blokh and Lyubich for non-flat $S$-multimodal maps \cite{BL} and by Lyubich \cite{Ly89} for non-flat $C^3$ multimodal maps.
In both papers Blokh and Lyubich show the absence of wandering intervals (the mains theorem of both papers) and the finiteness of topological attractors (Corollary~2.2 in \cite{BL} and Corollary~2.2 in \cite{Ly89}).
For $C^2$ non-flat maps (not only multimodal maps), the absence of wandering interval was proved by de Melo and van Strien (see Theorem~A in chapter IV at \cite{MvS}, a previous proof for $C^3$ maps appeared in \cite{MS89}). 
In \cite{vSV} van Strien and Vargas  proved the finiteness of non-periodic metrical attractors for $C^2$ non-flat maps and obtained an alternative proof for the absence of wandering intervals for $C^2$ non-flat maps.
All the papers cited above use a deep understanding of the asymptotical behavior of those dynamics  and the absence of wandering interval to obtain the finiteness of non-periodic topological or metric attractors.

In contrast with the smooth case, it is well known that interval maps with discontinuities may have wandering interval associated to Cherry attractors.
It was conjectured (see for instance Conjecture 1.14. at \cite{MM}) that  all wandering intervals for non-flat piecewise $C^2$ maps belong to the basin of a Cherry attractors, but this conjecture is open even for contracting Lorenz maps (maps with a unique discontinuity).
In \cite{Br}, Brandão was able to show the uniqueness of non-periodic topological attractors for $C^2$ non-flat contracting Lorenz maps. 
The same result for metrical attractors was obtained by Keller and St.~Pierre \cite{SP,KsP} assuming the Lorenz map being $C^3$ non-flat and with negative Schwarzian derivative.
In \cite{BPP21}, Brandão, Palis and Pinheiro proved Keller and St.~Pierre's result without assuming  non-flatness.

In the papers above about contracting Lorenz maps, the existence of only one discontinuity and the preserving orientation property of the maps are crucial in the proofs.
Nevertheless, in \cite{BPP19} Brandão, Palis and Pinheiro show that the existence of wandering interval is not an obstruction to prove the finiteness of non-periodic metric attractors for general maps even with several discontinuities.
This fact is also true for topological attractors (Proposition~\ref{PropositionWanderingIntervals}) and so, combining this information about wandering intervals with the Ergodic Formalism,
we were able to prove the finiteness of non-periodic attractor for discontinuous maps that are piecewise $C^2$ (see Theorem~\ref{TheoremFinTopInterC2} in the next section).
For $C^2$ interval maps without discontinuities, we used Baire ergodicity to analyze the statistical behavior of generic points of the interval, showing the coincidence of the statistical attractors (defined by Ilyashenko \cite{Ily}) with the topological ones and calculating the upper Birkhoff averages of continuous functions for generic points, even when the map has abundance of historical behavior (see Theorem~\ref{TheoremUdecompodition} in the next section). 

\section{Basic notations and statement of mains results}

A countable union of nowhere dense subsets of $[0,1]$ is said to be {\bf \em meager}; the complement of such a set contains a {\bf \em residual set}, i.e., a countable intersection of open and dense sets. If a set is not meager then it is called {\bf \em fat}.
We say that two sets $U,V\subset\XX$ are (meager) {\bf\em  equivalent}, denoted by $U\sim V$, when $U\triangle V:=(U\setminus V)\cup(V\setminus U)$ is a meager set. We say that a given property $P$ is {\bf\em generic on $U\subset\XX$} if the set of all $x\in U$ where $P$ is not true is a meager set.

Let $\cc\subset(0,1)$ be a finite set and $f:[0,1]\setminus\cc\to[0,1]$ a $C^2$ local diffeomorphism. 
We say that $f$ is {\bf\em non-flat} if for each $c\in\cc$  there exist $\varepsilon>0$, constants $\alpha, \beta\ge1$  and $C^2$  diffeomorphisms $\phi_{0}:[c-\varepsilon,c]\to \text{Im}(\phi_{0})$ and $\phi_{1}:[c,c+\varepsilon]\to\text{Im}(\phi_{1})$ such that $\phi_1(c)=\phi_2(c)=0$ and $$f(x)=\begin{cases}a+\big(\phi_{0}(x)\big)^\alpha&\text{ if }x\in(c-\varepsilon,c)\cap(0,1)\\
b+\big(\phi_{1}(x)\big)^\beta&\text{ if }x\in(c,c+\varepsilon)\cap(0,1)\end{cases}$$
where $a=f(c_-):=\lim_{0<\delta\to 0}f(c-\delta)$ and $b=f(c_+):=\lim_{0<\delta\to 0}f(c+\delta)$. We say that $f$ is {\bf\em non-degenerated} if it is non-flat and $\#\fix(f^n)<+\infty$ for every $n\in\NN$
(\footnote{Notice that every non-flat piecewise $C^3$ interval map $f$ with negative Schwarzian derivative is non-degenerated.
Furthermore, the property of being non-degenerated is a generic property for a $C^2$ interval map.}).

The {\bf\em pre-orbit} of $A\subset[0,1]$ is $\co_f^-(x):=\bigcup_{n\ge0}f^{-n}(A)$ and the pre-orbit of $x\in[0,i]$ is $\co_f^-(x):=\co_f^{-}(\{x\})$.
The {\bf\em forward orbit} of $x\notin\co_f^-(\cc)$ is $\co_f^+(x)=\{f^n(x)\,;\,n\ge0\}$. If $x\in\co_f^-(\cc)$, set $\co_f^+(x)=\{x,\cdots,f^{n}(x)\}$, where $n=\min\{j\ge0\,;\,x\in f^{-j}(\cc)\}$. 
If $x\notin\co_f^-(\cc)$, then the {\bf\em omega-limit} of $x$, the accumulating points of the forward orbit of $x$, is  $\omega_f(x)=\bigcap_{n\ge0}\overline{\co_f^+(f^n(x))}\ne\emptyset$.
If $x\in\co_f^-(\cc)$,  $\omega_f(x)=\emptyset$.
The {\bf\em alpha limit set} of $x$, the accumulating points of the pre-orbit of $x$, is
$\alpha_f(x)=\bigcap_{n\ge0}\overline{\co_f^-(f^{-n}(x))}$.

A {\bf\em forward invariant set} is a set $A\subset[0,1]$ such that $f(A\setminus\cc)\subset A$. Given a compact forward invariant set $A$, define the {\bf\em basin of attraction of $A$} as $$\beta_f(A)=\{x\in\XX\,;\,\emptyset\ne\omega_f(x)\subset A\}.$$
Following Milnor's definition of topological attractor (indeed, minimal topological 
attractor \cite{Mi}), a compact forward invariant set $A$
is called  {\bf\em a topological attractor}, or for short, an attractor, if
$\beta_f({A})$ and $\beta_f({A})\setminus\beta_f(A')$ are fat sets for every compact forward invariant set $A'\subsetneqq A$.

\begin{maintheorem}
[Finiteness of non-periodic topological attractors for piecewise $C^2$ maps]\label{TheoremFinTopInterC2}
If $\cc\subset(0,1)$ is a finite set and $f:[0,1]\setminus\cc\to[0,1]$ is a  non-degenerated $C^2$ local diffeomorphism, then  
either $\BB(f)$, the union of the basin of attraction of all periodic-like attractor, is an open and dense subset of $[0,1]$ or there exists a finite collection of attractors $A_1,\cdots,A_n$, with $n\le\#\cc+2^{2\#\cc}$, such that 
\begin{center}
$[0,1]\setminus\BB(f)\sim\bigcup_{j=1}^{\ell}\beta_f(A_j)$.
\end{center} 
Furthermore,
\begin{enumerate}
\item for every $1\le j\le n$, $\omega_f(x)=A_j$ generically on $\beta_f(A_j)$;
\item if $f$ does not admit wandering interval then $n\le\#\cc$;
\item each $A_j$ is either a Cantor set or a cycle of intervals;
\item if $A_j$ is a Cantor set then  $A_j=\bigcup_{v\in V}\overline{\co_f^+(v)}$, for some $V\subset\{f(c_\pm)\,;\,c\in\cc\}$ $($\footnote{ Indeed, $V=\{f(c_{\pm})\,;\,c\in\cc_{\pm}\}$, where $\cc_-=\{c\in\cc\,;\,c\in\overline{\co_f^+(x)\cap(0,c)}\}$ generically on $\beta_f(A_j)$ and $\cc_+=\{c\in\cc\,;\,c\in\overline{\co_f^+(x)\cap(c,1)}\}$ generically on $\beta_f(A_j)$.}$)$;
\item if $I$ is a wandering interval then $I\subset\beta_f(A_j)$ for some $1\le j\le\ell$, where $A_j$ is a Cantor set;
\end{enumerate}
\end{maintheorem}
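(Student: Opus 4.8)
The plan is to derive the whole statement from the Ergodic Formalism of \cite{Pi21} together with the description of wandering interval attractors (Proposition~\ref{PropositionWanderingIntervals}), using the classical one--dimensional machinery only for the part of the dynamics that sees no wandering interval. \emph{Step 1 (Baire--ergodic decomposition).} First I would verify that a non-degenerated $C^2$ local diffeomorphism $f:[0,1]\setminus\cc\to[0,1]$ falls under the hypotheses of the Ergodic Formalism of \cite{Pi21}; this yields, modulo a meager set, a decomposition of $[0,1]$ into $\BB(f)$ together with an at most countable union $\bigcup_{\lambda}\beta_f(A_\lambda)$ of basins of non-periodic attractors, with $\omega_f(x)=A_\lambda$ generically on $\beta_f(A_\lambda)$. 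Granting the finiteness proved below, this already gives item~(1) and the relation $[0,1]\setminus\BB(f)\sim\bigcup_j\beta_f(A_j)$ (the $A_j$ with fat basin being the ones appearing in the union, whence the index $\ell\le n$), and it also produces the dichotomy: if some non-periodic attractor $A$ exists, then $\beta_f(A)$ is fat and, since $\omega_f(x)$ is generically non-periodic-like on it, $\beta_f(A)$ meets $\BB(f)$ only in a meager set, so $\BB(f)\setminus\BB(f)$ is fat and $\BB(f)$ cannot be dense; conversely, if no non-periodic attractor exists then $[0,1]\sim\BB(f)$ and a short Baire--category argument upgrades $\BB(f)$ to an open and dense set.

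\emph{Step 2 (classification, item~(3)).} Let $A$ be one of the non-periodic attractors. If $A$ has nonempty interior, then minimality of $A$ as a topological attractor (no proper compact forward invariant $A'\subsetneq A$ captures a non-meager part of $\beta_f(A)$) forces $A$ to be a cycle of intervals on which $f$ acts transitively. Otherwise $A$ is nowhere dense; it has no isolated point, since an isolated point of a compact forward invariant set that is the omega-limit of generic points of its basin would force $A$ to contain a periodic orbit and hence to be periodic-like, a contradiction. So in this case $A$ is a Cantor set.

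\emph{Step 3 (finiteness and the bound).} Split the non-periodic attractors according to whether their basin contains a wandering interval. For those whose basin contains a wandering interval, Proposition~\ref{PropositionWanderingIntervals} applies and gives that each such attractor is a Cantor set of the form $A=\bigcup_{v\in V}\overline{\co_f^+(v)}$ with $\emptyset\ne V\subset\{f(c_\pm)\,;\,c\in\cc\}$ (and the sharper form of $V$ recorded in the footnote of item~(4)); this also yields item~(5), because a wandering interval is by definition not attracted to a periodic orbit and, by transitivity, cannot be attracted to a cycle of intervals either, so its omega-limit is one of these Cantor attractors. Since the set $V$ determines $A$, the assignment $A\mapsto V$ is injective, so there are at most $2^{2\#\cc}-1$ attractors in this family. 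For the attractors whose basin contains no wandering interval, I would localize to a finite union of periodic intervals around each of them and run the distortion and pullback arguments for non-flat maps in the absence of wandering intervals (in the spirit of \cite{MvS} and \cite{vSV}), adapted to the present setting in which $f$ has no critical points but finitely many non-flat discontinuities: each such non-periodic attractor absorbs at least one point of $\cc$ in the way a critical point is absorbed in the smooth multimodal theory, and distinct attractors absorb distinct points of $\cc$, so there are at most $\#\cc$ of them; this also gives item~(4) for the solenoidal ones among them (the Cantor attractor being $\overline{\co_f^+(v)}$ for some lateral limit $v=f(c_\pm)$). Adding the two bounds gives $n\le\#\cc+2^{2\#\cc}$, and item~(2) is the special case in which the first family is empty.

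\emph{Main obstacle.} The genuinely delicate point is Step~3, i.e.\ converting the a priori countable Baire--ergodic decomposition into a finite one. This splits into two difficulties: pushing the analysis behind Proposition~\ref{PropositionWanderingIntervals} far enough to obtain the rigid identity $A=\bigcup_{v\in V}\overline{\co_f^+(v)}$ and the injectivity of $A\mapsto V$; and adapting the de~Melo--van~Strien / van~Strien--Vargas wandering-interval and distortion estimates to maps that are local diffeomorphisms off a finite non-flat discontinuity set, including the verification that a cycle of intervals never occurs as the omega-limit of a wandering interval (needed so that the two families are exhaustive and item~(5) holds). By comparison, checking the hypotheses of the Ergodic Formalism in Step~1 and the Baire-category upgrade of $\BB(f)$ in the degenerate branch of the dichotomy are routine, though they must be handled with care because of the discontinuities.
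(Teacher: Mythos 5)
Your Steps~1 and~2, and the treatment of the wandering--interval attractors in Step~3, essentially track what the paper does: Proposition~\ref{PropositionWanderingIntervals} gives the bound $2^{2\#\cc}$ and the form $A=\bigcup_{v\in V}\overline{\co_f^+(v)}$, and the Cantor/cycle dichotomy is Proposition~\ref{PropositionAttractorDichotomy}. Where the proposal genuinely departs --- and where it has a real gap --- is the bound of $\#\cc$ on the remaining attractors. You propose to obtain it by ``running the distortion and pullback arguments for non-flat maps'' in the spirit of \cite{MvS,vSV}. The paper does not do this, and for good reason: those estimates were built for smooth maps whose critical points are genuine zeros of the derivative, and adapting them to local diffeomorphisms with finitely many non-flat discontinuities is a heavy and far-from-routine piece of analysis (this is, in fact, the hard work of \cite{BPP19,BPP21} in the measure-theoretic setting). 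Your ``Main obstacle'' paragraph acknowledges the difficulty, but the plan does not resolve it, so the $\#\cc$ bound, and with it items~(1)--(4) and the very existence of a \emph{finite} decomposition, remains unproved.

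The idea you are missing is Proposition~\ref{PropositionOMEGAdecomposition}, which yields the $\#\cc$ bound by a purely topological argument with no distortion estimates at all: if $x\notin H(f)$, every neighborhood of $x$ must eventually iterate to cover a point of $\cc$ (else a small interval around $x$ would be a homterval, by the Homterval Lemma); one then defines, for each $c\in\cc$, the set $\cu(c)$ of points whose neighborhoods asymptotically cover $c$ as in \eqref{Equationiutyc9}, checks that $[0,1]\setminus H(f)=\bigcup_{c\in\cc}\cu(c)$ modulo meager, and uses Lemma~\ref{Lemmaoiuytr567} to see that each fat $\cu(c)$ is a Baire ergodic component. Combined with Proposition~\ref{Propositiontop-ergodicAttractors}, this immediately gives at most $\#\cc$ attractors outside the wandering-interval family, using only that $f$ is a local homeomorphism and $\cc$ is finite. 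Put differently: the assignment ``each attractor absorbs a discontinuity'' that you want is correct, but it is established here by Baire ergodicity, not by real bounds or pullback estimates; identifying that mechanism is exactly the content of the paper that your plan does not supply.
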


The (upper) {\bf\em   visiting frequency} of $x\in\XX$ to $V\subset\XX$ is given by
\begin{equation}\label{Equationhgfdfb345}
  \tau_x(V)=\tau_{x,f}(V)=\limsup_{n\to\infty}\frac{1}{n}\#\{0\le j<n\,;\,f^{j}(x)\in V\}.
\end{equation}
We define the {\bf\em statistical $\omega$-limit set} of $x\in\XX$ as
$$\omega_f^*(x)=\{y\,;\,\tau_x(B_{\varepsilon}(y))>0\text{ for all }\varepsilon>0\}.$$
According to Ilyashenko (see page 148 of \cite{AAIS}), the {\bf\em statistical basin of attraction} of a compact forward invariant set $A\subset\XX$ is defined as  $$\beta_f^*(A)=\{x\,;\,\omega_f^*(x)\subset A\}.$$ We call $A$ a (topological) {\bf\em statistical attractor} for $f$ if $\beta_f^*(f)$ is a fat set and there is no compact set $A' \subsetneqq A$ such that $\beta_f^*(A')$ is the same as $\beta_f^*(A)$ up to a meager set (\footnote{ If, instead of $\beta_f^*(A)$ being fat, we ask that $\beta_f^*(A)$ has positive Lebesgue measure then $A$ is called a {\bf\em (metrical) statistical attractor}.}).

According to Ruelle \cite{Ru} and Takens \cite{Ta08}, a point $x\in\XX$ has {\bf\em historic behavior} when $\limsup_n\frac{1}{n}\sum_{j=0}^{n-1}\varphi\circ f^j(x)>\liminf_n\frac{1}{n}\sum_{j=0}^{n-1}\varphi\circ f^j(x)$ for some continuous function $\varphi:\XX\to\RR$.

\begin{maintheorem}[Generic statistical behavior  for $C^2$ interval maps]\label{TheoremUdecompodition}
If $f:[0,1]\to[0,1]$ is a non-degenerated $C^2$ map with critical set $\cc$ then there exist a finite collection of statistical attractors $A_1,\cdots,A_m$ such that:
\begin{enumerate}
\item $\bigcup_{j=1}^m\beta_f^*(A_j)$ contains a residual subset of $[0,1]$;
\item the number of attractors that aren't  periodic-like is smaller or equal to $\#\cc$;
\item $\omega_f(x)=\omega_f^*(x)=A_j$ generically on $\beta_f^*(A_j)$, that is, the topological and the statistical attractors are the same, in particular
\begin{enumerate}
\item[(3.1)] $\beta_f^*(A_j)=\beta_f(A_j)$;
\item[(3.2)] each $A_j$ is either a periodic orbit or a Cantor set or a cycle of intervals;
\item[(3.3)] $\omega_f(x)=\omega_f^*(x)\;\text{ generically on }[0,1]$;
\end{enumerate}

\item a generic point in $\beta_f^*(A_j)$ has historic behavior $\iff$ $A_j$ is a cycle of intervals;
\item $h_{top}(f|_{A_j})>0$ $\iff$ $A_j$ is a cycle of intervals;
\item generically on $[0,1]$ the (upper) Birkhoff average of a continuous function $\varphi\in C([0,1],\RR)$ admits only a finite numbers of values. That is, setting
\begin{center}
$a_j=\max\{\int\varphi d\mu\,;\,\mu\in\cm^1(f)\text{ and }\mu(A_j)=1\}$,	
\end{center}
then
$$\limsup\frac{1}{n}\sum_{j=0}^{n-1}\varphi\circ f^j(x)\in\{a_1,\cdots,a_s\}\;\text{ generically on }[0,1].$$
Indeed, $\limsup\frac{1}{n}\sum_{j=0}^{n-1}\varphi\circ f^j(x)=a_j$ generically on $\beta_f^*(A_j)$.
\end{enumerate}
\end{maintheorem}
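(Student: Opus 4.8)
The plan is to build the statistical picture on top of the topological one supplied by Theorem~\ref{TheoremFinTopInterC2}, using the Baire ergodicity and the ergodic formalism of \cite{Pi21}. Since $f:[0,1]\to[0,1]$ is continuous and non-flat it has no wandering interval and only finitely many periodic attractors (de Melo--van Strien, \cite{MvS}); applying Theorem~\ref{TheoremFinTopInterC2} with $\cc$ playing the role of the critical/discontinuity set, we get a finite list $A_1,\dots,A_m$ consisting of these periodic attractors together with the $\le\#\cc$ non-periodic topological attractors, each of the latter a Cantor set or a cycle of intervals, with $\BB(f)$ absorbed into the periodic part and $\bigcup_{j=1}^m\beta_f(A_j)$ meager-equivalent to $[0,1]$; moreover $\omega_f(x)=A_j$ generically on $\beta_f(A_j)$. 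Thus item (2) is the count in Theorem~\ref{TheoremFinTopInterC2}(2), and items (1), (3), (3.1), (3.2), (3.3) will follow once we prove $\omega_f^*(x)=\omega_f(x)$ generically on each $\beta_f(A_j)$.

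So the crux is the identity $\omega_f^*=\omega_f$ generically. The inclusion $\omega_f^*(x)\subset\omega_f(x)$ is immediate from the definitions, and combined with $\omega_f(x)=A_j$ generically on $\beta_f(A_j)$ it gives $\beta_f(A_j)\subset\beta_f^*(A_j)$ up to a meager set; in particular $\beta_f^*(A_j)$ is fat, and if additionally $A_j\subset\omega_f^*(x)$ generically then, for any compact $A'\subsetneq A_j$, a generic $x\in\beta_f^*(A_j)$ lies outside $\beta_f^*(A')$, so $\beta_f^*(A_j)\setminus\beta_f^*(A')$ is co-meager in the fat set $\beta_f^*(A_j)$, hence fat, and $A_j$ is a genuine statistical attractor. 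The substantial point is precisely this reverse inclusion $A_j\subset\omega_f^*(x)$ for generic $x\in\beta_f(A_j)$, i.e.\ that the orbit of a generic point of the basin visits every ball $B_\varepsilon(y)$, $y\in A_j$, with positive upper frequency. Here I would invoke the Baire ergodicity of the dynamics attached to $A_j$ --- an induced or first-return model on a suitable domain, furnished by the ergodic formalism of \cite{Pi21} --- which makes every generic statistical quantity constant along generic orbits, so that it suffices to produce a single non-meager set of points $x$ with $A_j\subset\omega_f^*(x)$. When $A_j$ is a periodic orbit or a Cantor set this is immediate, since $f|_{A_j}$ is uniquely ergodic --- an attracting cycle, respectively an adding-machine-like minimal Cantor system --- so \emph{every} $x\in\beta_f(A_j)$ has $\omega_f^*(x)=A_j$, with visiting frequencies equal to the masses of the unique invariant measure. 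When $A_j$ is a cycle of intervals one uses the topological transitivity of $f^{k}$ on a component of $A_j$ together with the classical fact that, for a transitive interval map, the set of points whose empirical measures accumulate on every invariant probability measure is residual; such points have $\omega_f^*(x)=A_j$. This gives $\omega_f^*=\omega_f$ generically, hence (1)--(3.3).

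For (4) and (5) I would split along the trichotomy in (3.2). If $A_j$ is a periodic orbit or a Cantor set, the unique ergodicity above forces the empirical measures of \emph{every} $x\in\beta_f(A_j)$ to converge to the unique invariant measure $\mu$, so $\frac1n\sum_{i<n}\varphi\circ f^i(x)\to\int\varphi\,d\mu$ for every continuous $\varphi$: no historic behavior; and an attracting cycle and an adding-machine-like Cantor system both have zero topological entropy, so $h_{top}(f|_{A_j})=0$. If $A_j$ is a cycle of intervals, then $f^{k}$ on a component is transitive and hence (Blokh) has positive topological entropy, so $h_{top}(f|_{A_j})>0$; moreover such a map carries at least two distinct ergodic invariant measures, and the residual set of points whose empirical measures accumulate on \emph{all} invariant measures consists of points with historic behavior (pick $\varphi$ separating two of those measures), and it meets $\beta_f^*(A_j)$ generically, which proves (4). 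Finally, for (6): for any $x\in\beta_f(A_j)$ every weak-$*$ accumulation point of its empirical measures is $f$-invariant with support in $\omega_f(x)\subset A_j$, hence integrates $\varphi$ to a value $\le a_j$, so $\limsup_n\frac1n\sum_{i<n}\varphi\circ f^i(x)\le a_j$; and for generic $x$ the genericity statement just used --- or unique ergodicity in the periodic/Cantor cases --- yields a subsequence of empirical measures converging to a measure $\mu\in\cm^1(f)$ with $\mu(A_j)=1$ and $\int\varphi\,d\mu=a_j$, forcing equality. Taking $\{a_1,\dots,a_s\}$ to be the distinct values among the finitely many $a_j$, and noting that on each $\beta_f^*(A_j)$ the exceptional set is meager in $[0,1]$ while $\bigcup_j\beta_f^*(A_j)$ contains a residual set, we conclude that $\limsup_n\frac1n\sum_{i<n}\varphi\circ f^i(x)\in\{a_1,\dots,a_s\}$ generically on $[0,1]$.

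The main obstacle is exactly the reverse inclusion $A_j\subset\omega_f^*(x)$ for generic $x$ --- equivalently, the genericity of realizing as a limit of empirical measures a measure of full support on $A_j$, and in (6) the maximizing measure. The periodic and Cantor cases reduce to unique ergodicity, but the cycle-of-intervals case requires controlling the visiting frequencies of a genuinely chaotic transitive map, and this is where the Baire-ergodic framework of \cite{Pi21} does the real work, by turning the assertion into the non-meagerness of explicitly describable families of orbits rather than into measure-theoretic estimates; the rest is bookkeeping on top of Theorem~\ref{TheoremFinTopInterC2} and standard one-dimensional dynamics.
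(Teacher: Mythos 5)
Your overall plan is the same as the paper's: build the statistical picture on top of the topological decomposition, split along the trichotomy (periodic orbit / minimal Cantor set / cycle of intervals), use unique ergodicity in the first two cases, and then push genericity through by Baire ergodicity. The first two cases you handle exactly as the paper does (solenoid/adding-machine unique ergodicity from \cite{BL}). The bookkeeping for items (1), (2), (3.1)--(3.3), (5), (6) also lines up.

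The one place you differ, and where your argument has a real gap, is the cycle-of-intervals case. You invoke ``the classical fact that, for a transitive interval map, the set of points whose empirical measures accumulate on every invariant probability measure is residual.'' Transitivity by itself is not what drives that statement; it is a specification-type property. It happens to be true for transitive continuous interval maps (because they have positive entropy, their periodic points are dense, and topologically mixing piecewise-monotone maps satisfy specification), but you neither name the property nor supply the chain of results that yields it, and as written the step is an assertion rather than a proof. The paper avoids this by proving Proposition~\ref{PropCyleIsStTr}, that $f$ restricted to a cycle of intervals is \emph{strongly} transitive, and combining it with the density of periodic points in cycles (Block--Coven, Parry, quoted through \cite{ALM}) so that Theorem~B of \cite{Pi21} applies directly and gives $\omega_f^*(x)\supset A_j\cap\per(f)$ generically, hence $\omega_f^*(x)=\overline{A_j\cap\per(f)}=A_j$. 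This is also what delivers item (6) via Corollary~\ref{CorollaryBirkForCycle}: the $\max$ formula for the upper Birkhoff average is not the generic bound plus a realizing subsequence you sketch, but a consequence of Theorem~B of \cite{Pi21} applied to the strongly transitive cycle. So: your periodic and Cantor arguments are fine, your Baire-ergodicity transfer is the right idea in outline (though you leave implicit that $\{x:A_j\subset\omega_f^*(x)\}$ is an invariant Baire set so that Baire ergodicity forces it to be co-meager once it is fat), but the cycle-of-intervals case needs to be anchored to strong transitivity and dense periodic points as in Proposition~\ref{PropCyleIsStTr} and Corollary~\ref{CorollaryBirkForCycle}, not to an unreferenced ``classical fact'' about transitivity.
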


\section{Baire ergodicity}

In this section, let $\cc\subset(0,1)$ be a finite set and $f:[0,1]\setminus\cc\to[0,1]$ a local homeomorphism. 
A set $A\subset[0,1]$ is called a  {\bf  \em   Baire set} if there is an open set $U$ such that $A\triangle U:=(A\setminus U)\cup(U\setminus A)$ is a meager set, that is, if $A$ is equal to an open set modulo a meager sets.
A Baire set is also called {\bf  \em an almost open set} or a set with the {\bf \em Baire property} ({\bf BP}).
Hence, a Baire set $A\subset X$ is fat if and only there exists a nonempty open set $U\subset X$ such that $A=U$ modulo a meager set.
The collection of all Baire sets is a $\sigma$-algebra. Indeed, it is the smallest $\sigma$-algebra containing all open sets and all meager sets. Moreover, in $[0,1]$, the Borel sets and  Baire sets coincide.

As $f$ is a local homeomorphism, $f^{-1}(U)$ is meager whenever $U$ is meager, i.e., $f$ is a {\bf\em non-singular map}.
Let $2^{[0,1]}$ be the power set of $\XX$, that is, the set for all subsets of $\XX$, including the empty set. Define $\bar{f}:2^{[0,1]}\circlearrowleft $ given by
$$\bar{f}(U)=\begin{cases}
	\emptyset & \text{ if }U\subset\cc\\
	f(U\setminus\cc)=\{f(x)\,;\,x\in U\setminus\cc\} & \text{ if }U\not\subset\cc
\end{cases}$$
We say that $U\subset\XX$ is {\bf\em invariant} if $f^{-1}(U)=U$,  {\bf\em forward invariant} if $\bar{f}(U)\subset U$, {\bf\em almost invariant} if $f^{-1}(U)\sim U$ and {\bf\em almost froward invariant} when $\bar{f}(U)\sim U$.

It was introduced in \cite{Pi21} the concept of {\em Baire Ergodicity} for general Baire topological spaces.
In our context, if $U\subset[0,1]$ is a measurable fat set then $g:U\to[0,1]$ is called {\bf\em Baire ergodic} if $g^{-1}(U)\sim U$ and $V\sim\emptyset$ or $V\sim U$ for every invariant measurable set $V\subset U$.
We say that a measurable fat set $U\subset\XX$ is a {\bf \em Baire ergodic component for $f$} if $U\sim f^{-1}(U)$ and  $f|_{U}$ is a Baire ergodic map.
The main connection between Baire ergodic components and attractors is the following result.

\begin{Proposition}[See Proposition~4.2 and 4.7 at \cite{Pi21}]\label{Propositiontop-ergodicAttractors}
If $U\subset \XX$ is a Baire ergodic component of $f$, then there exists a unique topological attractor $A$ and a unique statistical attractor $A^*$ for $U$.
Moreover,  $A^*=\omega_f^*(x)\subset\omega_f(x)=A$ generically on $U$ and $U\sim\beta_f(A)\supset\beta_f^*(A)\sim U$.
\end{Proposition}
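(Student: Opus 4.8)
The plan is to run essentially the same argument twice, once for the family of $\omega$-limit sets and once for the family of statistical $\omega$-limit sets, using the Baire ergodicity of $f|_U$ to pin down both limits generically on $U$, and then to read off uniqueness and the attractor property formally.

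First I would fix a countable basis $\{V_n\}_{n\ge1}$ of open subsets of $[0,1]$ and, for each $n$, consider the sets $W_n=\{x\in U:\omega_f(x)\cap V_n\ne\emptyset\}$ and $W_n^*=\{x\in U:\tau_x(V_n)>0\}$. The first step is to check that each $W_n$ and $W_n^*$ is a Borel (hence Baire) subset of $U$ and that both are invariant up to the meager set $\co_f^-(\cc)$: here one uses that $\omega_f(f(x))=\omega_f(x)$ and $\tau_{f(x)}(V_n)=\tau_x(V_n)$ for $x\notin\co_f^-(\cc)$ (shifting the orbit by one index alters the counting in \eqref{Equationhgfdfb345} by at most one), and that $\co_f^-(\cc)$ is countable because $f$ is a local homeomorphism and $\cc$ is finite. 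Baire ergodicity of $f|_U$ then forces $W_n\sim\emptyset$ or $W_n\sim U$, and likewise for $W_n^*$; set $\ca=\{n:W_n\sim U\}$ and $\ca^*=\{n:W_n^*\sim U\}$.

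The second step patches these countably many dichotomies. Since a countable intersection of residual subsets of $U$ is residual, generically on $U$ one has, simultaneously for every $n$, that $\omega_f(x)\cap V_n\ne\emptyset\iff n\in\ca$ and $\tau_x(V_n)>0\iff n\in\ca^*$. As $\{V_n\}$ is a basis, this determines both limit sets: generically on $U$,
$$\omega_f(x)=A:=[0,1]\setminus\!\!\bigcup_{n\notin\ca}\!\!V_n,\qquad \omega_f^*(x)=A^*:=[0,1]\setminus\!\!\bigcup_{n\notin\ca^*}\!\!V_n,$$
both closed. One then verifies the soft facts that $A$ and $A^*$ are nonempty (from $\omega_f(x)\ne\emptyset$ for $x\notin\co_f^-(\cc)$, and for $A^*$ a compactness-plus-subadditivity argument starting from $\tau_x([0,1])=1$) and forward invariant (continuity of $f$ off $\cc$ together with the shift-invariance of $\omega_f$ and $\tau_x$), and that $\omega_f^*(x)\subset\omega_f(x)$ always, so $A^*\subset A$. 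Generic constancy of $\omega_f$ gives $U\subset\beta_f(A)$ up to a meager set; conversely $\beta_f(A)\cap U$ is an invariant fat Baire set, hence $\sim U$, so $U\sim\beta_f(A)$, and symmetrically $U\sim\beta_f^*(A^*)\subset\beta_f^*(A)$.

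Finally, uniqueness and the attractor property are formal consequences of generic constancy. If $A'\subsetneqq A$ is compact forward invariant, the identity $\omega_f(x)=A$ (generic on $U$) shows $\beta_f(A')\cap U$ is meager in $U$, so $\beta_f(A)\setminus\beta_f(A')\sim U$ is fat; together with fatness of $\beta_f(A)\sim U$ this is exactly the statement that $A$ is a topological attractor for $U$. If $\tilde A$ were another such attractor, its basin would be fat, Baire and invariant in $U$, hence $\sim U$, forcing $\omega_f(x)\subset\tilde A$ generically and thus $A\subset\tilde A$; and $A\subsetneqq\tilde A$ is impossible, since the attractor property of $\tilde A$ would make $\beta_f(\tilde A)\setminus\beta_f(A)$ fat while $\beta_f(\tilde A)\sim U\sim\beta_f(A)$ makes it meager. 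The same reasoning with $\omega_f^*$, $\beta_f^*$ and Ilyashenko's minimality clause yields uniqueness of the statistical attractor $A^*$. The step I expect to be the main obstacle is the first one: establishing Borel measurability of $W_n$ and $W_n^*$ and their exact invariance modulo $\co_f^-(\cc)$ in the presence of the discontinuities, since this is where the hypotheses on $f$ are genuinely used — everything afterwards is driven purely by the definition of Baire ergodicity.
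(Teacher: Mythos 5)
The paper does not prove this proposition; it is imported verbatim from \cite{Pi21} (Propositions~4.2 and 4.7), so there is no in-paper argument to compare against, and your attempt has to stand on its own. The core mechanism you use — a countable family of Baire dichotomies indexed by a basis $\{V_n\}$, with $W_n=\{\omega_f(x)\cap V_n\ne\emptyset\}$ and $W_n^*=\{\tau_x(V_n)>0\}$, shown almost invariant modulo the countable set $\co_f^-(\cc)$, followed by Baire-category patching to get generic constancy $\omega_f(x)=A$ and $\omega_f^*(x)=A^*$ — is exactly the right way to exploit Baire ergodicity, and your verification that $A$ and $A^*$ are closed, nonempty, forward invariant with $A^*\subset A$ is fine. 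Two small technical remarks: the paper's definition of Baire ergodicity quantifies over \emph{strictly} invariant measurable $V\subset U$, so before invoking the dichotomy you must replace $W_n$ by the genuinely invariant $\bigcup_{m\ge0}f^{-m}\bigl(\bigcap_{k\ge0}f^{-k}(W_n)\bigr)$, which is $\sim W_n$ (this is the same hull trick used in the proof of Lemma~\ref{Lemmaoiuytr567}); and your $G_\delta$/$\limsup$-of-Borel argument for measurability of $W_n$ and $W_n^*$ should be spelled out, though it does work.

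The genuine gap is in the basin equivalence. You claim $U\sim\beta_f(A)$, but both halves of your argument for this (``$U\subset\beta_f(A)$ up to meager'' and ``$\beta_f(A)\cap U$ is invariant fat Baire, hence $\sim U$'') establish only that $U\setminus\beta_f(A)$ is meager; neither touches the reverse inclusion $\beta_f(A)\setminus U\sim\emptyset$. That reverse inclusion is not a formal consequence of generic constancy of $\omega_f$ on $U$: nothing in the definitions a priori prevents a fat almost-invariant set $W$ disjoint from $U$ (mod meager) with $\omega_f(x)\subset A$ for $x\in W$, in which case $\beta_f(A)\setminus U\supset W$ would be fat. Your uniqueness argument then inherits this gap, since it invokes $\beta_f(\tilde A)\sim U\sim\beta_f(A)$ for a competing attractor $\tilde A$, which presupposes exactly the unproved equivalence. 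Separately, you never address the stated inclusion $\beta_f(A)\supset\beta_f^*(A)$; the inclusion that does follow from $\omega_f^*(x)\subset\omega_f(x)$ is the opposite one, $\beta_f(A)\subset\beta_f^*(A)$ (modulo $\co_f^-(\cc)$), and your symmetric argument only gives $U\sim\beta_f^*(A^*)\subset\beta_f^*(A)$, again leaving $\beta_f^*(A)\setminus U$ uncontrolled. Since $\beta_f(A_j)\sim U_j$ is load-bearing downstream (e.g.\ in Proposition~\ref{PropositionAttractorDichotomy} and Theorem~\ref{TheoremFinTopInter}), this direction needs an actual argument — or one must weaken the statement to ``$U\subset\beta_f(A)$ and $U\subset\beta_f^*(A)$ modulo meager sets, with $\omega_f=A$ and $\omega_f^*=A^*$ generically on $U$,'' which is all your method delivers.
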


A fat set $U\subset[0,1]$ is called {\bf\em asymptotically transitive} if 
$$
\bigg(\bigcup_{j\ge0}\bar{f}^j(A)\bigg) \cap \bigg(\bigcup_{j\ge0}\bar{f}^j(B)\bigg)\not\sim\emptyset$$ for every open set (with respect to the induced topology) $A$ and $B\subset U$.
A it was proved in \cite{Pi21}, in most case, asymptotically transitive is equivalent to be Baire ergodic.

\begin{Lemma}[Corollary of Proposition~3.8 of \cite{Pi21}]\label{Lemmaoiuytr567}
Let $f:[0,1]\setminus\cc\to[0,1]$ be a local homeomorphism, where $\cc$ is a finite subset of $(0,1)$ and let $U\subset[0,1]$ be an almost invariant measurable set.
If $U$ is a fat set then $U$ is a Baire ergodic component of $f$  if and only if $U$ is asymptotically transitive.
\end{Lemma}
\begin{proof}
As $\cc$ is finite and $f$ is a local homeomorphism, we get that $$U'=\bigcup_{n\ge0}f^{-n}\bigg(\bigcap_{j\ge0}f^{-j}(U)\bigg)$$ is a $f$-invariant set and $U'\sim U$ (see Lemma~4.1 in \cite{Pi21}).
As $U'$ is a Borel set, it is a Baire set and a Baire space with respect to the induced topology.
Moreover, $f|_{U'}$ is continuous and non-singular. Hence, it follows from Proposition~3.8 of \cite{Pi21} that $f|_{U'}$ is Baire ergodic if and only if $f|_{U'}$ is asymptotically transitive.
As $U'\sim U$, ($U$ is a Baire ergodic component of $f$) $\iff$ ($f|_{U'}$ is Baire ergodic) $\iff$  ($f|_{U'}$ is asymptotically transitive) $\iff$ ($U$ is asymptotically transitive).

\end{proof}

\section{Wandering intervals and attractors}

Let $\cc$ be a finite subset of $(0,1)$ and $f:[0,1]\setminus\cc\to[0,1]$ be a local homeomorphism. We say that $p\in[0,1]$ is a periodic-like point if there is a $n\ge1$ such that
\begin{enumerate}
\item $f^n((p-\delta,p))\cap(p-\delta,p)\ne\emptyset$ for every $\delta>0$ or
\item $f^n((p,p+\delta))\cap(p,p+\delta)\ne\emptyset$ for every $\delta>0$.
\end{enumerate}
We say that $f^n(p_-)=p_-$ when the first case above is true. If second case is true, we say that $f^n(p_+)=p_+$. 

A {\bf\em periodic-like attractor} is a finite set $A=\{p,f(p_-),\cdots,f^{n-1}(p_-)\}$, with $f^n(p_-)=p_-$, or $A=\{p,f(p_+),\cdots,f^{n-1}(p_+)\}$, with $f^n(p_+)=p_+$. such that 
\begin{enumerate}
\item $f^n|_{(p-r,p)}$ is a homeomorphism, $f^n((p-r,p))\subset(p-r,p)$ and $\lim_jf^{n j}(x)=p$ for every $x\in(p-r,p)$ or
\item $f^n|_{(p,p+r)}$ is a homeomorphism, $f^n((p,p+r))\subset(p,p+r)$ and $\lim_jf^{n j}(x)=p$ for every $x\in(p,p+r)$,
\end{enumerate}
for some $r>0$. Denote the {\bf\em union of the basin of attraction of all periodic-like attractors} by $\BB(f)$ and let $\per(f)$ be the {\bf\em set of all periodic-like points of $f$}.

\begin{Remark}\label{RemarkPerLike}
A periodic-like point $p$ is not a periodic point only if $\co_f^+(p_\pm)\cap\cc\ne\emptyset$. Hence there exist at most $2\#\cc$ periodic-like orbits that are not periodic orbits. In particular, there are at most $2\#\cc$ periodic-like attractors.
\end{Remark}

A {\bf\em homterval} is an open interval $J=(a,b)$ such that $f^n|_J$ is a homeomorphism for every $n\ge1$. A {\bf\em wandering interval} is a homterval $J$ such that $J\cap\BB(f)=\emptyset$ and $f^j(J)\cap f^k(J)=\emptyset$ for all $1\le j<k$. Recall that the {\bf\em nonwandering  set} of $f$, denoted by $\Omega(f)$, is the set of point $x\in X$ such that $V\cap\bigcup_{n\ge0}\bar{f}^n(V)\ne\emptyset$ for every open neighborhood $V$ of $x$. Hence $I\cap\Omega(f)=\emptyset$ for every wandering interval $I$.

\begin{Lemma}[Homterval Lemma, see for instance Lemma~3.1 at \cite{MvS}]\label{LemmaHomterval}
Let $U$ be an open subset of $[0,1]$, $f:U\to[0,1]$ a continuous map and $I=(a,b)$ a homterval of $f$. If $I$ is not a wandering interval then $I\subset\BB(f)\cup\co_f^-(\per(f))$.
\end{Lemma}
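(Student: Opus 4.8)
\medskip
\noindent\textbf{Proof strategy.}
The plan is to follow the classical one-dimensional argument (as in \cite{MvS}), adapted to allow discontinuities.
Write $I_n:=f^n(I)$; since $I$ is a homterval, every $f^n|_I$ is a homeomorphism onto the open interval $I_n$, and $f$ restricts to a monotone homeomorphism on $I_n$ and, more generally, on each homterval.
Because $I$ is not a wandering interval, either (a)~$I\cap\BB(f)\ne\emptyset$, or (b)~there are $1\le j<k$ with $I_j\cap I_k\ne\emptyset$.
I treat the main case (b) first, and then dispose of (a) under the extra assumption that (b) fails.

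In case (b), put $n=k-j\ge1$, $g=f^n$ and $K=I_j$, so that $K$ and $g(K)=I_k$ are overlapping homtervals for $g$.
The first step is to show that $T:=\bigcup_{i\ge0}g^i(K)$ is again a homterval for $g$: two monotone continuous maps that agree on an overlapping subinterval have the same orientation there, so $g^i$, being monotone on each of $K$ and $g(K)$, is monotone on $K\cup g(K)$; iterating, $\bigcup_{0\le i\le N}g^i(K)$ is a homterval for $g$ for every $N$, hence so is the increasing union $T$.
Thus $g|_T$ is a monotone continuous embedding of the interval $T$ with $g(T)=\bigcup_{i\ge1}g^i(K)\subseteq T$.
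Replacing $g$ by $g^2=f^{2n}$ if necessary (then $T$ is a homterval for $g^2$ and $g^2(T)\subseteq T$), we may assume $g|_T$ is increasing; hence every orbit $(g^i(t))_{i\ge0}$, $t\in T$, is monotone and converges to some $q\in\overline T$.
Using $g(T)\subseteq T$ and the constant sign of $g(s)-s$ on the component of $T\setminus\fix(g)$ containing a tail of that orbit, one shows $q$ is either $g$-fixed with a one-sided neighbourhood in $T$ mapped into itself and contracted toward $q$ --- so $q$ is the base point of a periodic-like attractor of $g$ --- or else $t=q$ is already $g$-fixed.
Since a fixed point of $g=f^n$ (resp.\ $f^{2n}$) is a periodic-like point of $f$, and a periodic-like attractor of $g$ gives rise to a periodic-like attractor of $f$ with basin contained in $\BB(f)$, this yields $T\subseteq\BB(f)\cup\co_f^-(\per(f))$; pulling back by the homeomorphism $f^j|_I$ and using that $\BB(f)$ and $\co_f^-(\per(f))$ are backward invariant (here $I\cap\co_f^-(\cc)=\emptyset$ because $I$ is a homterval), we conclude $I\subseteq\BB(f)\cup\co_f^-(\per(f))$.

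It remains to treat case (a) when (b) fails, i.e.\ $I\cap\BB(f)\ne\emptyset$ but the intervals $I_i$, $i\ge1$, are pairwise disjoint.
Choose $x\in I$ with $f^{n\ell}(x)\to p$ as $\ell\to\infty$, where $p$ is the base point of a periodic-like attractor of period $n$ with immediate basin $(p-r,p)$ (the right-sided case is symmetric).
Pairwise disjointness gives $\sum_{i\ge1}|I_{ni}|\le1$, hence $|I_{ni}|\to0$, so we may pick $m\ge1$ with $f^{nm}(x)\in(p-r/2,p)$ and $|I_{nm}|<r/4$.
If $I_{nm}$ reached $p$ or $p-r$ then, $f^n$ being a monotone homeomorphism of the immediate basin, $I_{nm}$ and $f^n(I_{nm})=I_{n(m+1)}$ would overlap, contradicting disjointness; hence $I_{nm}\subseteq(p-r,p)$, and so $I\subseteq f^{-nm}\big((p-r,p)\big)\subseteq\BB(f)$.
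The delicate points are the gluing step that makes $T$ a genuine homterval for $g$ (so that $g|_T$ cannot fold and the order argument applies) and the classification of the limit point $q$ when discontinuities are present: one must verify that a monotonically convergent $g$-orbit really lands in the basin of a periodic-like attractor, or hits a periodic-like point, instead of merely accumulating on one.
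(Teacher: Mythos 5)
The paper does not prove this lemma itself; it refers to Lemma 3.1 of de Melo--van Strien, and your argument follows precisely that classical route: glue overlapping iterates $f^j(I),f^k(I)$ into a single monotone interval $T$ for $g=f^{k-j}$, pass to $g^2$ to get an increasing map, track the monotone $g$-orbits to a limit $q$, and identify $q$ as a periodic or periodic-like point whose one-sided immediate basin absorbs the orbit. The two points you flag as delicate (that $T$ is a genuine homterval so that $g|_T$ cannot fold, and that the monotone limit $q$ is actually the base of a periodic-like attractor rather than merely an accumulation point when discontinuities are allowed) are indeed where the extra care goes, and they are fillable as you indicate; so the proposal is essentially correct and matches the cited proof.
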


It follows from the Homterval Lemma that, if $I=(a,b)$ is not a homterval then, taking $n=\min\{j\ge0\,;\,f^j(I)\cap\cc\ne\emptyset\}$ then  $c\in\cc$ such that $c\in f^n(I)$ and $f^n|_I$ is a homeomorphism between $I$ and the open interval $f^n(I)$. As a consequence, if $I=(a,b)$ is not a homterval then $$\interior\bigg(\bigcup_{n\ge0}f^n(I))\bigg)\cap\cc\ne\emptyset.$$

\begin{Lemma}\label{LemmaBaireXhomterval}
	If $U\subset[0,1]$ is a Baire ergodic component of a local homeomorphism $f:[0,1]\setminus\cc\to[0,1]$, where $\cc$ is a finite subset of $(0,1)$, and $I$ is a homterval for $f$ then $U\cap I\sim\emptyset$.
\end{Lemma}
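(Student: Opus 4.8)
The plan is to argue by contradiction: suppose $U\cap I$ is fat. Since $U$ is a Baire set and $I$ is open, $U\cap I$ is a Baire set, so being fat it is equivalent to a nonempty open subinterval of $I$; shrinking, I may assume $J\subset I$ is an open interval with $J\subset U$ modulo a meager set, and $J$ is still a homterval. Now I split into two cases according to the Homterval Lemma (Lemma~\ref{LemmaHomterval}). If $J$ is \emph{not} a wandering interval, then $J\subset\BB(f)\cup\co_f^-(\per(f))$. The first possibility is ruled out because $\BB(f)$ — the basin of all periodic-like attractors — being contained in $U$ modulo meager would force the unique attractor of $U$ (from Proposition~\ref{Propositiontop-ergodicAttractors}) to be periodic-like, but more directly: if a nonempty open piece of $J$ lies in $\BB(f)$, then that open piece is equivalent to $U$, and the orbit of $U$ would accumulate on a periodic-like attractor, contradicting asymptotic transitivity of $U$ (Lemma~\ref{Lemmaoiuytr567}) unless $U$ itself is the (meager or trivial) basin piece — I need to check that a Baire ergodic component cannot be a basin of a periodic-like attractor and simultaneously contain a homterval disjoint from... actually the cleanest route is: $\BB(f)\cap I=\emptyset$ is not given, so instead observe that $J\cap\BB(f)\ne\emptyset$ together with $J$ being an interval gives $J\subset\BB(f)$ only after noting $\BB(f)$ is open; then $J$ is attracted to a periodic-like attractor $P$, so $\omega_f(x)\subset P$ generically on $J$, hence generically on $U$ (as $J\sim U$), forcing the attractor $A$ of $U$ to satisfy $A\subset P$; but then $U$ is (modulo meager) the basin of a periodic-like attractor and in particular $I\cap U$, being inside a homterval converging to $P$, still gives $U\cap I\sim\emptyset$ only if... hmm, this needs care. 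Let me instead handle $\co_f^-(\per(f))$: this is a countable set (since $\per(f)$ is countable when $\#\fix(f^n)<\infty$, by Remark~\ref{RemarkPerLike} and non-degeneracy), hence meager, so $J\subset\co_f^-(\per(f))$ is impossible for an interval $J$. Thus non-wandering $J$ must be $\subset\BB(f)$.

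So the argument reduces to two sub-obstructions: (i) $J\subset\BB(f)$, and (ii) $J$ is a wandering interval. For (ii): a wandering interval $J$ satisfies $J\cap\Omega(f)=\emptyset$, so $\bar f^j(J)\cap\bar f^k(J)=\emptyset$ for $j<k$ and in particular no open subset of $\bigcup_j\bar f^j(J)$ can meet $\bigcup_j\bar f^j(J')$ for a disjoint translate — I will show this contradicts asymptotic transitivity of $U$. Concretely, pick two disjoint open subintervals $A,B$ of $J$; their forward orbits under $\bar f$ are disjoint from each other by the wandering property (both are subsets of iterates of $J$ which are pairwise disjoint and on which $f^n$ is injective), so $(\bigcup_j\bar f^j(A))\cap(\bigcup_j\bar f^j(B))$ is empty, hence meager — contradicting that $U$, containing $J$ modulo meager and hence containing $A,B$ (as relatively open sets modulo meager), is asymptotically transitive. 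For (i): if $J\subset\BB(f)$ then $\omega_f(x)$ is a periodic-like orbit for all $x\in J$, so is finite; the attractor of $U$ is $\omega_f(x)$ generically on $U\sim J$, hence finite, hence a periodic-like attractor $P$. But a point in $I\cap U$ (fat, nonempty) has $\omega_f$ equal to $P$ generically; since $I$ is a homterval and $f^n|_I$ is a homeomorphism for all $n$, the orbit of $I$ cannot converge to a periodic orbit while staying injective unless $I$ is eventually mapped inside the immediate basin, and then $I$ meets $\co_f^-(\per(f))$ after all (the endpoints' dynamics), contradicting meagerness as above — or more simply, $I$ being a homterval attracted to periodic $P$ means $I\subset\BB(f)$, but then $I\cap U$ fat forces $U\sim\BB(f)$-piece which has its own attractor $P$, and one checks a homterval converging to a periodic orbit is either in the preimage of $\per(f)$ or in $\BB(f)$, both already handled.

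The main obstacle I anticipate is cleanly ruling out case (i) without circularity: $\BB(f)$ is genuinely consistent with being (part of) a Baire ergodic component, so I cannot derive a contradiction from $U$ being a periodic-like basin per se. The resolution must use that $I$ is a \emph{homterval}: if $J\subset I\cap\BB(f)$ with $J$ open, then $J$ lies in the basin of a periodic-like attractor $P$ with period $n$, so for large $k$, $f^{nk}(J)$ is contained in the immediate attracting neighborhood $(p-r,p)$ or $(p,p+r)$ of $P$; but then $f^{nk}|_J$ maps $J$ into this interval and subsequent iterates $f^{n}$ contract it toward $p$, which is fine for a homterval — so actually there is no contradiction with being a homterval directly. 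Instead I will observe: $\BB(f)$ is open and $f$-backward-invariant, and $\co_f^-(P)\subset\BB(f)$; a homterval $I$ with $I\cap\BB(f)\ne\emptyset$ gives $I\subset\BB(f)\cup\co_f^-(\per(f))$ by the Homterval Lemma regardless — so the \emph{whole} Lemma is really: the homterval $I$ itself (not just $J$) is, by Lemma~\ref{LemmaHomterval}, either wandering or $\subset\BB(f)\cup\co_f^-(\per(f))$; in the non-wandering case $I\subset\BB(f)$ modulo the countable (meager) set $\co_f^-(\per(f))$, and then $U\cap I\subset U\cap\BB(f)$ modulo meager, so it suffices to show $U\cap\BB(f)\sim\emptyset$ whenever $U$ is a Baire ergodic component that is \emph{not} itself (equivalent to) a periodic-like basin — and by the uniqueness of the attractor of $U$, if $U\cap\BB(f)$ is fat then that fat piece is equivalent to $U$, forcing $U$'s attractor to be periodic-like, i.e. $U$ \emph{is} a periodic-like basin, and then there is nothing to prove because the statement about homtervals is vacuous/absorbed (or: in that case $I\sim U\sim\BB(f)$-component which contains no homterval disjoint from $\per(f)$-preimages — here I invoke that a basin of a periodic-like attractor, minus $\co_f^-(\per(f))$, contains no homterval, since every homterval in it is eventually monotonically pushed into the one-sided attracting interval and is thus controlled). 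I will organize the final write-up around exactly this dichotomy, citing Lemma~\ref{LemmaHomterval}, Lemma~\ref{Lemmaoiuytr567}, Proposition~\ref{Propositiontop-ergodicAttractors}, Remark~\ref{RemarkPerLike}, and the non-degeneracy hypothesis for meagerness of $\co_f^-(\per(f))$.
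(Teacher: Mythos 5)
Your case-(ii) argument (when the shrunken interval $J\subset I$ is a wandering interval) is correct and is essentially the paper's argument: you take two disjoint open subintervals $A,B\subset J$, note that for $j\ne k$ one has $\bar f^j(A)\subset f^j(J)$ and $\bar f^k(B)\subset f^k(J)$ which are disjoint by the wandering property, while for $j=k$ injectivity of $f^j|_J$ gives disjointness; so $\bigl(\bigcup_j\bar f^j(A)\bigr)\cap\bigl(\bigcup_j\bar f^j(B)\bigr)=\emptyset$, contradicting asymptotic transitivity via Lemma~\ref{Lemmaoiuytr567}. The paper phrases the same obstruction in terms of the grand orbits $U_0,U_1$ of the two halves of $(p,q)$ being disjoint invariant fat sets; your forward-orbit/asymptotic-transitivity version is an equivalent packaging.

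The gap is in case (i), which you circle around repeatedly without closing. You correctly deduce from the Homterval Lemma that a non-wandering $J$ satisfies $J\subset\BB(f)\cup\co_f^-(\per(f))$, but then you look for a contradiction with $U\cap\BB(f)$ being fat per se, or try to claim that ``a basin of a periodic-like attractor, minus $\co_f^-(\per(f))$, contains no homterval'' --- that last claim is simply false (for $f(x)=x/2$, any subinterval $(1/4,1/2)$ is a homterval contained in the basin of $0$). The point you miss is that you do not need to rule out the case $J\subset\BB(f)$ globally; you need only shrink $J$ once more. If $J\subset\BB(f)$ then $J$ is eventually carried (monotonically, since $f^n|_J$ is a homeomorphism) into the one-sided attracting neighborhood of the periodic-like point, and inside that neighborhood one can find a subinterval $(p',q')\subset J$ --- a fundamental-domain-type interval --- whose iterates $f^j((p',q'))$, $j\ge0$, are pairwise disjoint. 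This $(p',q')$ still satisfies $U\cap(p',q')\sim(p',q')$ and now has exactly the disjoint-iterate property you exploited in case (ii), so the same two-subintervals argument yields the contradiction. (If instead the fat part of $J$ sits in $\co_f^-(\per(f))$, a Baire-category argument puts a fat piece of some $f^k(J)$ inside $\per(f)$, which is an even more degenerate situation handled by the same splitting.) It is worth noting that the paper's own proof of this lemma is itself garbled --- it treats the argument as if $I$ were a wandering interval and cites the Homterval Lemma in a non-sequitur --- so the shrinking step is implicit at best; but your proposal, as written, explicitly concedes the case-(i) contradiction ``doesn't hold directly'' and never supplies the shrinking argument that makes the proof go through.
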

\begin{proof}
If $I$ is a wandering interval and $U\cap I$ is a fat set then let $(p,q)\subset I$ be such that $U\cap(p,q)\sim(p,q)$. As $(p,q)$ is a wandering interval, it follows from the Homterval Lemma that $(p,q)\subset\BB(f)\cup\co_f^-(\per(f))$.
Thus, $U_0=\bigcup_{n\in\ZZ}f^{n}((p,(p+q)/2))$ and $U_1=\bigcup_{n\in\ZZ}f^{n}((p+q)/2,q))$ are invariant fat sets and $U_0\cap U_1=\emptyset$, but this is impossible since we are assuming that $U$ is a Baire ergodic component.
\end{proof}

Proposition~\ref{PropositionOMEGAdecomposition} can be seen as a topological version of Theorem~1 and 2 of \cite{Ly91}.

\begin{Proposition}\label{PropositionOMEGAdecomposition}
Let $\cc\subset(0,1)$ be a finite set, $f:[0,1]\setminus\cc\to[0,1]$ be a local homeomorphism and $H(f)$ the union of all homtervals of $f$.
If $[0,1]\setminus H(f)$ is a fat set then $[0,1]\setminus H(f)$ can be decomposed into at most $\#\cc$ Baire ergodic components. 

Moreover, if $U$ is a Baire ergodic component of $f$ then  $U\cap H(f)\sim\emptyset$ and $U$ is residual in $B_{\varepsilon}(c)$ for some $c\in\cc$ and $\varepsilon>0$.
\end{Proposition}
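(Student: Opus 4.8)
The plan is to build the Baire ergodic components directly from the critical points and the Homterval Lemma, using Lemma~\ref{Lemmaoiuytr567} to reduce ``Baire ergodic component'' to ``fat almost-invariant asymptotically transitive set''. First I would consider, for each $c\in\cc$, the set $W_c$ of points $x\in[0,1]\setminus H(f)$ whose forward orbit accumulates on $c$ from an appropriate side, more precisely the union over small $\varepsilon$ of $\bigcup_{n\ge0}\bar f^{-n}(B_\varepsilon(c))$ intersected with $[0,1]\setminus H(f)$, and argue that every $x\in[0,1]\setminus H(f)$ lies (up to a meager set) in some $W_c$. This last point is where the hypothesis that the complement of the union of homtervals is fat is used together with the remark following the Homterval Lemma: if $x\notin H(f)$ then no neighborhood of $x$ is a homterval, so arbitrarily small intervals around a generic such $x$ have iterates hitting $\cc$, and a Baire-category argument (the orbit of a generic point enters $B_\varepsilon(c)$ for some fixed $c$) pins down a single $c$ for a residual subset of each piece.

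Next I would verify that each $W_c$, once it is fat, is almost invariant — this is essentially immediate from the definition since $\bar f^{-1}$ of an accumulation condition is again an accumulation condition, modulo the finitely many preimages of $\cc$, exactly as in the proof of Lemma~\ref{Lemmaoiuytr567} — and then show that the fat $W_c$'s are asymptotically transitive. For asymptotic transitivity: given two relatively open sets $A,B\subset W_c$, I would use that (a generic point of) $A$ has forward orbit entering every $B_\varepsilon(c)$, and similarly for $B$, so that $\bigcup_j\bar f^j(A)$ and $\bigcup_j\bar f^j(B)$ both contain, modulo meager sets, a full neighborhood of $c$ intersected with $W_c$; hence their intersection is fat. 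This simultaneously proves the ``moreover'' clause that $U$ is residual in $B_\varepsilon(c)$ for some $c$: a Baire ergodic component $U$, being fat and asymptotically transitive, must (by the accumulation argument) be forced to be almost equal to one of the $W_c$'s, and the $W_c$ restricted to a small enough $B_\varepsilon(c)$ fills it up residually because points near $c$ on the relevant side are non-homterval points whose orbits return near $c$.

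Then the count: there are at most $\#\cc$ sets $W_c$, they are pairwise almost disjoint after refining the side choice (or one simply takes the at-most-$\#\cc$ distinct ones that are fat), and together they cover $[0,1]\setminus H(f)$ up to a meager set; since each fat $W_c$ is a Baire ergodic component by Lemma~\ref{Lemmaoiuytr567}, and distinct Baire ergodic components are almost disjoint, the decomposition into at most $\#\cc$ components follows. Finally $U\cap H(f)\sim\emptyset$ for any Baire ergodic component $U$ is exactly Lemma~\ref{LemmaBaireXhomterval} applied to each homterval together with the fact that $H(f)$ is a countable union of homtervals (so a countable union of the corresponding meager intersections is still meager). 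The main obstacle I expect is the precise Baire-category bookkeeping needed to show that a generic point outside $H(f)$ has its forward orbit accumulating on a \emph{single} critical point with a \emph{fixed} side — handling the interplay between the two one-sided conditions in the non-flat setup and making sure the resulting sets are genuinely almost invariant and not merely ``almost forward invariant'' — but this is the kind of argument already carried out in \cite{Pi21} and in the proof of Lemma~\ref{Lemmaoiuytr567}, so I would lean on Lemma~4.1 of \cite{Pi21} to pass from almost invariance to genuine invariance when needed.
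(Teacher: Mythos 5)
Your plan correctly identifies the key ingredients (the remark after the Homterval Lemma, Lemma~\ref{Lemmaoiuytr567}, Lemma~\ref{LemmaBaireXhomterval}, and a classification of the pieces by critical points), but there is a genuine gap in the way you set up the building blocks and consequently in the asymptotic transitivity argument. You define $W_c$ by the condition that the \emph{forward orbit of $x$ itself} accumulates on $c$, i.e.\ $c\in\omega_f(x)$ (your ``$\bigcap_{\varepsilon}\bigcup_{n}\bar f^{-n}(B_\varepsilon(c))$''). The paper's $\cu(c)$ is defined by a different and strictly stronger-in-the-relevant-direction condition: for every $\varepsilon>0$, $c$ lies in $\interior\bigl(\bigcup_{n\ge0}\bar f^n(B_\varepsilon(x))\bigr)$, i.e.\ small \emph{neighborhoods} of $x$ expand under iteration to \emph{cover a full neighborhood of $c$}. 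These two notions are not the same, and the distinction is exactly where your transitivity argument breaks. Knowing that a generic $a\in A$ has $f^{n_k}(a)\to c$ does not give $\bigcup_{j}\bar f^j(A)\supset B_\delta(c)$ for a fixed $\delta>0$: as $n_k\to\infty$ the image $f^{n_k}(A')$ of a small interval $A'\ni a$ is an open set containing $f^{n_k}(a)$, but it may shrink to zero diameter, so you never capture a fixed neighborhood of $c$. Your claim that $\bigcup_j\bar f^j(A)$ and $\bigcup_j\bar f^j(B)$ ``both contain, modulo meager sets, a full neighborhood of $c$'' is therefore unjustified, and with it the conclusion that $W_c$ is asymptotically transitive. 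Worse, a fat $W_c$ defined your way need not be asymptotically transitive at all: all points of $W_c$ could accumulate on $c$ while the interval iterates expand across a different critical point $c'$, so the ``openness near $c$'' you want is simply absent. Note the remark after the Homterval Lemma really does give interior covering of \emph{some} $c\in\cc$ by iterates of a non-homterval interval, but it does not give interior covering of the $c$ you have fixed in advance via $\omega_f(x)$.

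The fix is to define the pieces as the paper does: for each $c\in\cc$ put $\cu(c)=\{x\,;\,B_\varepsilon(x)\cap\Omega(f)\not\sim\emptyset\text{ and }c\in\interior(\bigcup_{n\ge0}\bar f^n(B_\varepsilon(x)))\text{ for all }\varepsilon>0\}$. With this definition the covering of $[0,1]\setminus H(f)$ by the $\cu(c)$ follows from the remark after the Homterval Lemma plus a pigeonhole over the finitely many $c$ as $\varepsilon\to0$; asymptotic transitivity is then one line, since any two open $V_0,V_1\subset\cu(c)$ each have iterates covering a neighborhood of $c$, so those iterate-unions are open sets that overlap near $c$; invariance $g^{-1}(\cu(c))=\cu(c)$ is immediate; and the ``moreover'' clause that $U\sim\cu(c)$ is residual in some $B_\varepsilon(c)$ drops out directly, because when $\cu(c)$ is fat and compact the invariance plus the defining condition force $B_\varepsilon(c)\subset\cu(c)$ for small $\varepsilon$. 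Your treatment of $U\cap H(f)\sim\emptyset$ via Lemma~\ref{LemmaBaireXhomterval} and the count $\le\#\cc$ by identifying equivalent components are both fine once the pieces are defined correctly.
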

\begin{proof} Let $M\subset[0,1]$ be an open set such that $M\sim[0,1]\setminus H(f)$ and set $\XX=\overline{M}$.
Note that if $x\in\XX$ then $\bigcup_{n\ge0}\bar{f}^n(B_{\varepsilon}(x))\cap\cc\ne\emptyset$ for every $\varepsilon>0$. 
Thus, 
for each $c\in\cc$, define
\begin{equation}\label{Equationiutyc9}
  \cu(c)=\bigg\{x\in\Omega(f)\,;\,B_{\varepsilon}(x)\cap\Omega(f)\not\sim\emptyset\text{ and }c\in \interior\bigg(\bigcup_{n\ge0}\bar{f}^n(B_{\varepsilon}(x))\bigg)\;\forall\,\varepsilon>0\bigg\}.
\end{equation}

Hence,  $\XX=\bigcup_{c\in\cc}\cu(c)$ and therefore at least one element of $\{\cu(c);c\in\cc\}$ is a fat set.
Moreover, if $\cu(c)$ is a fat set then $c\in\Omega(f)$.
Let $g:\XX\setminus\cc\to\XX$ be the restriction of $f$ to $\XX$ and consider on $\XX$ the induced topology.

\begin{Claim}
If $\cu(c)$ is a fat set then it is a Baire ergodic component for $g$.
\end{Claim}
\begin{proof}[Proof of the claim]
Note that $g^{-1}(\cu(c))=\cu(c)$ and, as $\cu(c)$ is compact, $\cu(c)$ is  measurable.
Of course that, $c\in\interior\big(\bigcup_{n\ge0}\bar{g}^n(V_0)\big)\cap\interior\big(\bigcup_{n\ge0}\bar{g}^n(V_1)\big)$ for every open sets $V_0,V_1\subset\cu(c)$, proving that $\cu(c)$ is asymptotically transitive and so, by Lemma~\ref{Lemmaoiuytr567}, $\cu(c)$ is a Baire ergodic component to $g$.
\end{proof}

Note that when $\cu(c)$ is a fat compact set, it follows from the definition of $\cu(c)$ and from $g^{-1}(\cu(c))=\cu(c)$,  that $B_{\varepsilon}(c)\subset\cu(c)$ for some $\varepsilon>0$.

Let $\cc\subset\cc_0:=\{c\in\cc\,;\,\cu(c)$ is a fat set$\}\subset\cc$ 
such that $\{\cu(c)\,;\,c\in\cc\}=\{\cu(c)\,;\,c\in\cc_0\}$.
Let $c,c'\in\cc_0$. As $\cu(c)$ and $\cu(c')$ are Baire ergodic components, either $\cu(c)\sim\cu(c')$ or $\cu(c)\cap\cu(c')\sim\emptyset$.
Thus, we can choose $\{c_1,\cdots,c_\ell\}\subset\cc_0$ such that for each $c\in\cc_0$ there exists a unique $c_j$ such that $\cu(c)\sim\cu(c_j)$.
Therefore, $\{\cu(c_1),\cdots,\cu(c_\ell)\}$ is a Baire ergodic decomposition of $[0,1]\setminus H(f)$ for $f$.
Noting that $\ell\le\#\cc$ and that $H(f)\cap U\sim\emptyset$ for any Baire ergodic component of $f$ (Lemma~\ref{LemmaBaireXhomterval}), we conclude the proof.
\end{proof}

\begin{Corollary}[$\Omega$ decomposition for interval maps]\label{CorollaryOMEGAdecomposition}
Let $\cc\subset(0,1)$ be a finite set and $f:[0,1]\setminus\cc\to[0,1]$ be a local homeomorphism such that $\per(f)$ is a countable set.
If $\Omega(f)$ is a fat set then $\Omega(f)$ can be decomposed into at most $\#\cc$ Baire ergodic components. 
\end{Corollary}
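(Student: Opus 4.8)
The plan is to reduce Corollary~\ref{CorollaryOMEGAdecomposition} to Proposition~\ref{PropositionOMEGAdecomposition} by comparing, up to meager sets, the nonwandering set $\Omega(f)$ with the complement of the union of homtervals $[0,1]\setminus H(f)$. The key observation is that a wandering interval is disjoint from $\Omega(f)$ (stated explicitly in the paragraph preceding Lemma~\ref{LemmaHomterval}), whereas a homterval that is \emph{not} wandering is, by the Homterval Lemma, contained in $\BB(f)\cup\co_f^-(\per(f))$. Since $\per(f)$ is assumed countable and $f$ is a local homeomorphism (so pre-images of countable sets are countable, hence meager), the set $\co_f^-(\per(f))$ is meager; and the points of $\BB(f)$ that lie in $\Omega(f)$ form the (at most $2\#\cc$, by Remark~\ref{RemarkPerLike}) periodic-like orbits together with nothing else of positive Baire size — in fact $\BB(f)\cap\Omega(f)$ is contained in $\overline{\per(f)}$, which is again countable, hence meager. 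Therefore $H(f)\cap\Omega(f)$ is meager, i.e. $\Omega(f)\subset [0,1]\setminus H(f)$ modulo a meager set.

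Conversely, I would check $[0,1]\setminus H(f)\subset\Omega(f)$ modulo a meager set. A point $x\notin\Omega(f)$ has an open neighborhood $V$ with $V\cap\bigcup_{n\ge0}\bar f^n(V)=\emptyset$; such a $V$ is in particular contained in a homterval (the forward iterates of $V$ never hit $\cc$ near $x$, since a critical point in an iterate of a small interval would, by the discussion after Lemma~\ref{LemmaHomterval}, force a return), so $[0,1]\setminus\Omega(f)\subset H(f)$. Combining the two inclusions gives $\Omega(f)\sim[0,1]\setminus H(f)$. In particular, if $\Omega(f)$ is fat then so is $[0,1]\setminus H(f)$, and Proposition~\ref{PropositionOMEGAdecomposition} applies verbatim: $[0,1]\setminus H(f)$, and hence $\Omega(f)$, decomposes into at most $\#\cc$ Baire ergodic components $\cu(c_1),\dots,\cu(c_\ell)$ with $\ell\le\#\cc$, and since each $\cu(c_j)$ satisfies $\cu(c_j)\cap H(f)\sim\emptyset$, replacing each by its meager-equivalent subset of $\Omega(f)$ gives the desired decomposition of $\Omega(f)$ itself.

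The step I expect to require the most care is the claim that $\BB(f)\cap\Omega(f)$ is meager — more precisely, that it is covered, up to a meager set, by the closures of the (finitely many) periodic-like orbits. The point is that the basin of a periodic-like attractor is an open set one-sidedly accumulating on a periodic-like orbit, and its intersection with $\Omega(f)$ cannot contain an interval: an interval in $\BB(f)$ is a homterval whose forward orbit converges to a periodic-like point, so it is wandering-free only in the trivial sense and its points are not nonwandering unless they already lie on the orbit. One must also handle the possibility of homtervals that are eventually periodic (landing on $\per(f)$) separately — these contribute only the meager set $\co_f^-(\per(f))$ — and confirm that these are the only two ways a homterval can fail to be wandering. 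None of this is deep, but it is exactly where the hypotheses ``$\per(f)$ countable'' and ``$f$ a local homeomorphism with $\cc$ finite'' get used, so it should be written out rather than asserted.

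\begin{proof}
By the remark preceding Lemma~\ref{LemmaHomterval}, every wandering interval is disjoint from $\Omega(f)$. If $I$ is a homterval that is not wandering, then by the Homterval Lemma $I\subset\BB(f)\cup\co_f^-(\per(f))$. Since $f$ is a local homeomorphism and $\per(f)$ is countable, $\co_f^-(\per(f))=\bigcup_{n\ge0}f^{-n}(\per(f))$ is countable, hence meager. Moreover $\BB(f)\cap\Omega(f)$ is contained in the union of the (at most $2\#\cc$, by Remark~\ref{RemarkPerLike}) periodic-like orbits: an open interval inside the basin of a periodic-like attractor is a homterval on which some iterate of $f$ is a homeomorphism converging monotonically to the periodic-like point, so none of its points other than those already on the orbit can be nonwandering. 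As the periodic-like orbits form a finite set, $\BB(f)\cap\Omega(f)$ is meager. Combining, $H(f)\cap\Omega(f)$ is meager, so $\Omega(f)\subset[0,1]\setminus H(f)$ modulo a meager set.

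Conversely, if $x\notin\Omega(f)$ there is an open neighbourhood $V\ni x$ with $V\cap\bigcup_{n\ge0}\bar f^n(V)=\emptyset$; in particular $f^n|_V$ is injective for every $n\ge1$ (a critical point in some $f^n(V)$ would, by the discussion following the Homterval Lemma, produce an interior point of $\bigcup_{m\ge0}f^m(V)$ in $\cc$ and force a return, contradicting the choice of $V$), so $V$ is a homterval and $x\in H(f)$. Thus $[0,1]\setminus\Omega(f)\subset H(f)$, and together with the previous paragraph, $\Omega(f)\sim[0,1]\setminus H(f)$.

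In particular, if $\Omega(f)$ is a fat set then $[0,1]\setminus H(f)$ is a fat set, so Proposition~\ref{PropositionOMEGAdecomposition} gives a decomposition of $[0,1]\setminus H(f)$ into Baire ergodic components $\cu(c_1),\dots,\cu(c_\ell)$ with $\ell\le\#\cc$, each satisfying $\cu(c_j)\cap H(f)\sim\emptyset$. Setting $U_j=\cu(c_j)\cap\Omega(f)$, we have $U_j\sim\cu(c_j)$, so each $U_j$ is a fat almost invariant measurable set which is a Baire ergodic component of $f$, and $\bigcup_{j=1}^\ell U_j\sim\bigcup_{j=1}^\ell\cu(c_j)\sim[0,1]\setminus H(f)\sim\Omega(f)$. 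Hence $\Omega(f)$ is decomposed into at most $\#\cc$ Baire ergodic components.
\end{proof}
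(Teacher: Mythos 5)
Your approach is the same as the paper's: show that $\Omega(f)\cap H(f)$ is meager using the Homterval Lemma together with the countability of $\per(f)$, so that the decomposition of $[0,1]\setminus H(f)$ from Proposition~\ref{PropositionOMEGAdecomposition} transfers to $\Omega(f)$. The essential direction, $\Omega(f)\cap H(f)\sim\emptyset$, is argued correctly, though you say $\BB(f)\cap\Omega(f)$ is covered by at most $2\#\cc$ orbits when Remark~\ref{RemarkPerLike} only bounds the periodic-like orbits that are not genuinely periodic; what you actually need (and have) is that $\BB(f)\cap\Omega(f)\subset\per(f)$, which is countable by hypothesis.

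The second paragraph, where you claim $[0,1]\setminus\Omega(f)\subset H(f)$, contains a genuine gap. You assert that if $V$ is a neighborhood witnessing $x\notin\Omega(f)$ and some iterate $f^n(V)$ hits a critical point, this would ``force a return'' and contradict the choice of $V$. But the discussion after Lemma~\ref{LemmaHomterval} only yields $\interior\big(\bigcup_{m\ge0}f^m(V)\big)\cap\cc\ne\emptyset$; it says nothing about returns to $V$, since the critical orbit can perfectly well escape to an attractor disjoint from $V$. Fortunately this direction is not needed at all: by equation~\eqref{Equationiutyc9}, each $\cu(c)$ constructed in the proof of Proposition~\ref{PropositionOMEGAdecomposition} is \emph{by definition} a subset of $\Omega(f)$, so $U_j:=\cu(c_j)\cap\Omega(f)=\cu(c_j)$ exactly, and $\bigcup_j\cu(c_j)\sim[0,1]\setminus H(f)\supset\Omega(f)\setminus H(f)\sim\Omega(f)$ together with $\bigcup_j\cu(c_j)\subset\Omega(f)$ gives $\bigcup_j\cu(c_j)\sim\Omega(f)$. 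Replacing your second paragraph with this observation closes the gap; the rest of the proof then stands and coincides with the paper's argument.
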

\begin{proof}
Let, as in Proposition~\ref{PropositionOMEGAdecomposition},  $H(f)$ be the union of all homtervals of $f$.
	As $\Omega(f)\cap H(f)=\per(f)$. If $\per(f)\sim\emptyset$, we get that $\Omega(f)\cap H(f)\sim\emptyset$ and so, the Baire ergodic decomposition of $[0,1]\setminus H(f)$ given by Proposition~\ref{PropositionOMEGAdecomposition} induces a Baire ergodic decomposition of $\Omega(f)$.
\end{proof}

%
%
%
%
%
%

\begin{Lemma}\label{LemmaINTERVALDICHOTOMY}
Let $U\subset(a,b)\subset[0,1]$ be a nonempty open set and $\cp$ be the collection of all connected components of $U$.
Let $F:U\to[a,b]$ be a {\em full branch map}, i.e., $F$ is a local homeomorphism and $F(P)=(a,b)$ for every $P\in\cp$. 
If $\overline{U_0}$ has nonempty interior, where $U_0:=\bigcap_{n\ge0}F^{-n}((a,b))$, then either $\omega_F(x)=[a,b]$ generically on $(a,b)$ or $\overline{U_0}\sim H$, where $H$ is a union of homtervals. 
\end{Lemma}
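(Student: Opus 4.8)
The plan is to exploit the ``full branch'' structure to set up a dichotomy governed by whether the iterated images of a fixed subinterval eventually cover $(a,b)$. First I would fix an arbitrary nonempty open interval $J\subset(a,b)$ and consider its images under the inverse branches: since $F$ is a full branch map, for each component $P\in\cp$ there is a well-defined inverse branch $F_P^{-1}\colon(a,b)\to P$, and the sets $F_P^{-1}(J)$ form an open cover of $U$ up to the boundary points of the components. Iterating, one obtains that $\bigcup_{n\ge0}F^{-n}(J)$ is open and that its closure contains $U_0$, essentially because every point of $U_0$ stays in $(a,b)$ forever and hence its orbit must enter $J$ unless some obstruction occurs. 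The natural place to look for the obstruction is a point $x\in U_0$ whose forward orbit avoids $J$; the set of such points is $\bigcap_{n\ge0}F^{-n}((a,b)\setminus J)$, which is closed in $U_0$.

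The key step is then a Baire-category argument inside $\overline{U_0}$ (which by hypothesis has nonempty interior, hence is a Baire space with the induced topology). I would argue as follows: either for \emph{every} pair of nonempty open subintervals $J_0,J_1\subset(a,b)$ the set $\big(\bigcup_n\bar F^n(J_0\cap U)\big)\cap\big(\bigcup_n\bar F^n(J_1\cap U)\big)$ is non-meager in $\overline{U_0}$ --- in which case $U_0$ (or rather an $F$-invariant fat subset of it, cf.\ the construction in Lemma~\ref{Lemmaoiuytr567}) is asymptotically transitive, so by Lemma~\ref{Lemmaoiuytr567} it is a Baire ergodic component, and then Proposition~\ref{Propositiontop-ergodicAttractors} applied to the full branch map gives a unique topological attractor $A$ with $\omega_F(x)=A$ generically; a separate transitivity/covering argument using the full branch property forces $A=[a,b]$, yielding the first alternative --- or else there exist $J_0,J_1$ whose iterated images are ``generically disjoint'' in $\overline{U_0}$, and this must be converted into the statement that $\overline{U_0}\sim H$ for a union of homtervals. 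For the second alternative I would use the remark following the Homterval Lemma: a subinterval of $(a,b)$ on which all iterates of $F$ are homeomorphisms is a homterval, and failure of asymptotic transitivity will pin down a fat open subset of $U_0$ all of whose points lie in homtervals (since the $F$-orbit of such a point cannot spread over $(a,b)$, the interval around it never hits the branch boundaries $\partial\cp\subset F^{-1}(\{a,b\})$). Collecting these gives $\overline{U_0}\sim H$.

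I would organize the write-up so that the dichotomy is literally ``$U_0$ contains a fat asymptotically transitive set, or it does not'': in the first case invoke Lemma~\ref{Lemmaoiuytr567} and Proposition~\ref{Propositiontop-ergodicAttractors} and upgrade the attractor to all of $[a,b]$ via the full-branch covering property (every open set has some forward image equal to $(a,b)$, so the unique attractor cannot be a proper closed invariant set); in the second case, negating asymptotic transitivity produces nonempty relatively open $V_0,V_1\subset\overline{U_0}$ with $\bigcup_n\bar F^n(V_0)\cap\bigcup_n\bar F^n(V_1)\sim\emptyset$, and then I would show that near $\leb$-generic points of $V_0$ small intervals are homtervals, whence a fat (indeed residual in its interior) portion of $U_0$ lies in $H$, and a maximality/connectedness argument packages $\overline{U_0}$ as $\sim H$.

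The main obstacle I expect is the second alternative: turning the \emph{mere failure} of asymptotic transitivity into the \emph{positive structural statement} that $\overline{U_0}$ coincides, modulo a meager set, with a union of genuine homtervals. The delicate point is that a point may stay in $(a,b)$ forever (so it lies in $U_0$) yet its orbit need not be contained in a homterval unless one controls the branch boundaries; one must argue that if no homterval contains a neighborhood of $x$, then arbitrarily small intervals around $x$ hit $\bigcup_n F^{-n}(\partial U)$, and then use the full branch property to spread such an interval's image across $(a,b)$, contradicting the generic disjointness of $\bigcup_n\bar F^n(V_0)$ and $\bigcup_n\bar F^n(V_1)$. Making this contradiction quantitative --- i.e.\ identifying which open subsets to take as $V_0,V_1$ and checking their iterated images genuinely overlap on a non-meager set --- is where the real work lies; the first alternative, by contrast, is essentially a direct citation of Lemma~\ref{Lemmaoiuytr567} and Proposition~\ref{Propositiontop-ergodicAttractors} once the full-branch covering observation is in place.
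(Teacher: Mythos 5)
The paper's proof does not use the Baire--ergodicity machinery at all: it partitions $U_0$ according to whether the iterated branch components shrink. For $x\in U_0$, let $\cp_n(x)$ be the component of $F^{-n}((a,b))$ containing $x$. If $\diam(\cp_n(x))\not\to 0$, then $\bigcap_n\cp_n(x)$ is a nondegenerate interval on which every $F^n$ is a homeomorphism, i.e.\ a homterval; call these points $V$, and let $T=U_0\setminus V$. The dichotomy is ``$\overline T$ meager'' (then $\overline{U_0}\sim V\subset H$) versus ``$B:=\interior(\overline T)\ne\emptyset$'' (then one shows $T$ is dense, that forces $V=\emptyset$, and since $\diam(\cp_n(x))\to 0$ with $F^n(\cp_n(x))=(a,b)$ one gets transitivity and $\omega_F(x)=[a,b]$ residually). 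So the paper's proof is a direct, elementary analysis of branch components.

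Your proposal is a genuinely different route, organized around asymptotic transitivity and Lemma~\ref{Lemmaoiuytr567}/Proposition~\ref{Propositiontop-ergodicAttractors}. It could in principle be made to work, but there is a real gap, and it sits exactly where you declare the argument to be ``a direct citation.'' The parenthetical justification for the first alternative --- ``every open set has some forward image equal to $(a,b)$, so the unique attractor cannot be a proper closed invariant set'' --- is false as stated: the full-branch property says $F(P)=(a,b)$ for components $P\in\cp$, not for arbitrary small open intervals $J$. An arbitrary $J$ has $\bar F^n(J)\supset(a,b)$ only once some pullback component $\cp_n(x)$ is contained in $J$, and that happens precisely when $\diam(\cp_n(x))\to 0$. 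But whether the branch components shrink is exactly the homterval/no-homterval dichotomy that the paper's proof analyzes. In other words, to make your first alternative deliver $A=[a,b]$ you would first have to observe that being a Baire ergodic component excludes homtervals (cf.\ Lemma~\ref{LemmaBaireXhomterval}), hence $\diam(\cp_n(x))\to 0$ generically, and only then use $F^n(\cp_n(x))=(a,b)$ --- at which point you have reconstructed the core of the paper's argument inside what you were treating as the trivial branch. The second alternative, which you correctly flag as delicate (converting failure of asymptotic transitivity into $\overline{U_0}\sim H$), is also where the paper puts the real work, though its version is cleaner because the $V$/$T$ split gives the homtervals for free. The net effect is that the ergodic-component detour does not actually save any work here, and the paper's self-contained branch-shrinking argument is both shorter and avoids the gap.
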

\begin{proof}
Let $\cp$ be the collection of all connected components of $U$. Given $n\in\NN$ and  $x\in F^{-n}((a,b))$, let $\cp_n(x)$ be the connected component of $F^{-n}((a,b))$ containing $x$.
Note that 
	if $\lim_n\diam(\cp_n(x))>0$ for some $x\in U_0$, then $I:=\bigcap_{n\in\NN}\cp_n(x)$ is a homterval.
Indeed, as $I\subset\cp_n(x)$ and $F^n|_{\cp_n(x)}$ is a homeomorphism between $\cp_n(x)$ and $(a,b)$ for all $n\in\NN$, we get that $F^n|_I$ is a homeomorphism between $I$ and $F^n(I)$ for every $n\in\NN$.
So, as $I$ is a interval, it is a homterval. In particular, for each $n\in\NN$, there is $P_n\in\cp$ such that $F^n(I)\subset P_n$. 

Therefore, $V:=\{x\in U_0\,;\,\lim_n\diam(\cp_n(x))>0\}$ is a subset of $H$.
Let $T=U_0\setminus V$.
If $\overline{T}\sim\emptyset$ the proof is finished.
Hence, we may assume that $B:=\interior(\overline{T})\ne\emptyset$.

Choose $p\in B\cap T$.
Taking $n\in\NN$ big enough, we get $\cp_n(x)\subset B$.
As $F^n|_{\cp_n(x)}$ is a homeomorphism of $\cp_n(x)$ onto $(a,b)$, it follows from the forward invariance of $T$ that $T$ is dense in $(a,b)$.
Moreover, it follows from that fact that $\lim_n\diam(\cp_n(y))=0$ for every $y\in B\cap T$ that $V=\emptyset$.
Indeed, if $p\in V$, let $J:=\bigcap_{n\ge1}\cp_n(p)$ and note that $\bigcap_{n\ge1}\cp_n(x)=J$ for every $x\in J$, proving that $J\subset V$, but this is impossible as $T$ is dense in $(a,b)$.

Finally, as $U_0=T$ is dense in $(a,b)$, $\diam(\cp_n(x))\to0$ $\forall\,x\in U_0$  and $F^n(\cp_n(x))=(a,b)$ always, we get  that $F$ is transitive and, so, $\omega_F(x)=[a,b]$ residually.
\end{proof}

\begin{Proposition}\label{PropositionAttractorDichotomy}
If $U\subset[0,1]$ is a Baire ergodic component of a local homeomorphism $f:[0,1]\setminus\cc\to[0,1]$, where $\cc$ is a finite subset of $(0,1)$, then there is a  topological attractor $A$ of such that:
\begin{enumerate}
\item $\beta_f(A)\sim U$;
\item $\omega_f(x)=A$ generically on $U$;
\item $A$ is either a Cycle of intervals of a Cantor set;
\item if $A$ is a Cantor set then $$A=\bigg(\bigcup_{c\in\cc_-}\overline{\co_f^+(f(c_-))}\bigg)\cup\bigg(\bigcup_{c\in\cc_+}\overline{\co_f^+((c_+))}\bigg),$$ where  $\cc_-=\{c\in\cc\,;\,c\in\overline{\co_f^+(x)\cap(0,c)}\}$ generically on $U$ and $\cc_+=\{c\in\cc\,;\,c\in\overline{\co_f^+(x)\cap(c,1)}\}$ generically on $U$.

\end{enumerate}
\end{Proposition}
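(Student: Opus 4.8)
The plan is the following. Items (1) and (2) are immediate from Proposition~\ref{Propositiontop-ergodicAttractors}: take $A$ to be the unique topological attractor of $U$ given there, so that $\beta_f(A)\sim U$ and $\omega_f(x)=A$ generically on $U$. The first preliminary step is to locate $U$. By Lemma~\ref{LemmaBaireXhomterval} we have $U\cap H(f)\sim\emptyset$ with $H(f)$ the union of all homtervals; since $U$ is fat this makes $[0,1]\setminus H(f)$ fat, so Proposition~\ref{PropositionOMEGAdecomposition} applies, and as a Baire ergodic component admits no nontrivial splitting, $U$ must agree, modulo a meager set, with one of the pieces $\cu(c)$ produced there. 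In particular $U$ is residual in some $B_{\varepsilon}(c)$ with $c\in\cc\cap\Omega(f)$. The second preliminary step is a $0$--$1$ law: for each $c'\in\cc$ the set of $y\in U$ whose forward orbit accumulates at $c'$ from the left is $f$-invariant, hence $\sim\emptyset$ or $\sim U$ by Baire ergodicity, and likewise for accumulation from the right. This makes $\cc_-$ and $\cc_+$ well defined generically on $U$, forces $\cc_-\cup\cc_+\neq\emptyset$ (the orbit accumulates at $c$), and gives $\{f(c'_-):c'\in\cc_-\}\cup\{f(c'_+):c'\in\cc_+\}\subset A$; since $A$ is closed and forward invariant, the inclusion $\supset$ in item~(4) follows.

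Next, the dichotomy in (3). If $\interior(A)=\emptyset$ then $A$ is totally disconnected and one only has to discard isolated points: were $A$ finite it would be a periodic-like orbit, and then $\beta_f(A)\sim U$ would contain, modulo a meager set, its immediate basin, which is a homterval — contradicting Lemma~\ref{LemmaBaireXhomterval}; a routine refinement (a non-eventually-periodic point of an $\omega$-limit set is automatically a non-isolated point of it) removes the remaining isolated points, so $A$ is a Cantor set. If instead $W:=\interior(A)\neq\emptyset$, observe that the forward orbit of any point of $W$ stays in the closed forward invariant set $A$, so $W\subset\beta_f(A)$ modulo a meager set, hence $U$ is fat in every connected component of $W$, hence by Lemma~\ref{LemmaBaireXhomterval} no component of $W$ is a homterval. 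Because $f$ is a local homeomorphism, $\bar f$ carries each component of $W\setminus\cc$ onto an open connected subset of $A$, i.e.\ into a single component of $W$; combined with the remark after the Homterval Lemma (a non-homterval has an iterate whose interior meets $\cc$) this shows that every component of $W$ is, after finitely many steps, mapped into one of the at most $\#\cc$ components of $W$ that meet $\cc$. At this point I would build a full branch induced map $F$ on a small interval $(a,b)\ni c$ and invoke Lemma~\ref{LemmaINTERVALDICHOTOMY}: the alternative ``$\overline{U_0}$ is a union of homtervals'' is incompatible with $U\cap H(f)\sim\emptyset$ together with $U$ being residual near $c$, so we are in the transitive case, which upgrades the previous observation to: $W$ has finitely many components and $\bar f$ permutes them up to splitting at $\cc$. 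Hence $A$ is a cycle of intervals.

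Finally, item (4) in the Cantor case requires the reverse inclusion $A\subset\bigl(\bigcup_{c'\in\cc_-}\overline{\co_f^+(f(c'_-))}\bigr)\cup\bigl(\bigcup_{c'\in\cc_+}\overline{\co_f^+(f(c'_+))}\bigr)$. The plan is to analyze the complementary gaps of $A$: a gap disjoint from $\cc$ is mapped by $f$ homeomorphically onto an open interval which, being disjoint from the generic orbit (dense in $A$), lies again inside a gap; so the gaps are permuted, with exceptions only among the finitely many gaps containing or adjacent to points of $\cc$, and tracking endpoints yields $A\subset\overline{\co_f^+(\cc\cup\{f(c'_\pm):c'\in\cc\})}$, after which a clean-up using the definition of $\cc_\pm$ deletes the spurious one-sided critical values and leaves the displayed set. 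The main obstacle, in my view, is exactly this last step together with the construction of the full branch induced map feeding Lemma~\ref{LemmaINTERVALDICHOTOMY}: the Baire-category bookkeeping ($0$--$1$ laws, residuality near $c$, homterval exclusions) is soft, but converting ``every component/gap eventually lands near $\cc$'' into the sharp finiteness of the cycle of intervals and into the exact description of the Cantor attractor is where the finiteness of $\cc$ must be used quantitatively, and where combinatorial (and distortion) control of the branches is genuinely needed.
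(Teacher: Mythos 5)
Your outline gets the easy parts right --- items (1)--(2) follow from Proposition~\ref{Propositiontop-ergodicAttractors}, and the $0$--$1$ law from Baire ergodicity does give the $\supset$ inclusion and well-definedness of $\cc_\pm$ --- but the two places you yourself flag as ``the main obstacle'' are where the proof actually lives, and your plan does not close them. The most concrete gap is the claim that ``a non-eventually-periodic point of an $\omega$-limit set is automatically a non-isolated point of it, so a routine refinement removes the remaining isolated points.'' That is not a routine fact here: it is a theorem about continuous interval maps, and $f$ is allowed to be discontinuous on $\cc$, so the usual argument via one-sided monotonicity/connectedness breaks down; moreover you also have to rule out the possibility that $A$ is infinite but contains an isolated periodic sub-cycle, which your dichotomy (``$A$ finite'' vs.\ ``routine refinement'') never addresses. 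The paper has to work for this: assuming $p\in A$ is isolated, it takes the one-sided gap $(a,p)$ (or $(p,b)$), forms the first return map $G$ there, checks $G$ is full branch, and invokes Lemma~\ref{LemmaINTERVALDICHOTOMY} plus Lemma~\ref{LemmaBaireXhomterval} to force the cycle-of-intervals alternative, contradicting $\interior(A)=\emptyset$. You should expect to need exactly that induced-map machinery rather than a soft $\omega$-limit-set fact.

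The second gap is the order in which you attack things and the state of the item~(4) argument. In the paper, item~(4) is proved \emph{before} the no-isolated-points step and is itself the main estimate: one modifies $f$ near $\cc\setminus\cc_\pm$ to a map $g$ whose behavior on the generic orbit is unchanged but whose post-critical set lands in $\{0,1\}$; one sets $A_0=\bigcup_{c\in\cc_-}\overline{\co_f^+(f(c_-))}\cup\bigcup_{c\in\cc_+}\overline{\co_f^+(f(c_+))}$; and assuming $p\in A\setminus A_0$, one forms the first return map to the $A_0$-gap containing $p$, shows it is full branch precisely because of the post-critical modification, and again runs the Lemma~\ref{LemmaINTERVALDICHOTOMY}/Lemma~\ref{LemmaBaireXhomterval} dichotomy to a contradiction. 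Your gap-permutation sketch (``gaps are permuted with finitely many exceptions, tracking endpoints yields $A\subset\overline{\co_f^+(\cdots)}$'') is not wrong in spirit, but it conflates gap endpoints with all of $A$, does not isolate the one-sided $\cc_\pm$ correctly, and never produces the full branch structure that Lemma~\ref{LemmaINTERVALDICHOTOMY} demands; you acknowledge it remains to be done. Finally, for the cycle-of-intervals case with $\interior(A)\ne\emptyset$ you are overworking: since $\interior(A)\setminus\cc$ is open and forward-invariant (being the $f$-image of an open subset of the forward-invariant $A$), a generic point's orbit eventually enters and stays, so $A=\omega_f(x)\subset\overline{\interior(A)}$, and compactness alone gives finitely many components; no induced map or Lemma~\ref{LemmaINTERVALDICHOTOMY} is needed there, and the vaguely described ``upgrade'' to a finite permutation of components is unnecessary.
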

\begin{proof}
The existence of the topological attractor follows from Proposition~\ref{Propositiontop-ergodicAttractors}.
From the same proposition we get that $\beta_f(A)\sim U$ and that $\omega_f(x)=A$ generically on $U$, proving items (1) and (2).

As $\omega_f(x)=A$ generically on $U$, if $A$ has nonempty interior, we get $\co_f^+(x)\cap\interior(A)\ne\emptyset$ generically on $U$.
This implies that $\co_f^+(x)$ is dense on $A$ generically on $\interior(A)$.
Hence, $f|_A$ is transitive and $A=\overline{\interior(A)}$ and so, $A$ is a countable union of non trivial closed interval and, by compacity, $A$ is a finite union of close d intervals, proving that $A$ is a cycle of intervals.

Suppose that $\interior(A)=\emptyset$.
It follows from the proof of Proposition~\ref{PropositionOMEGAdecomposition} that $U\subset\alpha_f(c)$ for some $c\in\cc$.
This implies that $c\in\omega_f(x)$ generically on $U$ and so, $\omega_f(x)\supset\omega_f(f(c_-))$ or $\omega_f(x)\supset\omega_f(f(c_+))$.

%
%

For each $c\in\cc\setminus\cc_-$ let $0<a_c<c$ be such that $(a_c,c)\cap\cc=\emptyset$ and $(a_c,c)\cap\co_f^+(x)=\emptyset$ generically on $U$.
Likewise, for each $c\in\cc\setminus\cc_+$ let $c<b_c<1$ be such that $(c,b_c)\cap\cc=\emptyset$ and $(c,b_c)\cap\co_f^+(x)=\emptyset$ generically on $U$.
Let $g:[0,1]\setminus\cc\to[0,1]$ be a local homeomorphism such that $g(c_-)\in\{0,1\}$ for every $c\in\cc\setminus\cc_-$, $g(c_+)\in\{0,1\}$ for every $c\in\cc\setminus\cc_+$ and $g(x)=f(x)$ for every $x\notin V:=\big(\bigcup_{c\in\cc\setminus\cc_-}(a_c,c)\big)\cup\big(\bigcup_{c\in\cc\setminus\cc_+}(c,b_c)\big)$.

As $\co_f^+(x)=\co_g^+(x)$ generically on $U$, we get that $U$ is a Baire ergodic component to $g$, $A$ is the topological attractor associated to $U$ with respect to $g$ and $\omega_g(x)=A=\omega_f(x)$ generically on $U$.

Hence, defining $A_0:=\big(\bigcup_{c\in\cc_-}\overline{\co_f^+(f(c_-))}\big)\cup\big(\bigcup_{c\in\cc_+}\overline{\co_f^+(f(c_+))}\big)$, we have that $A_0\subset\omega_f(x)=A$ generically on $U$.
Suppose $p\in A\setminus A_0$ and that $\cw(p)=\{x\in\ U\,;\,p\in\omega_g(x)\}$ is a fat set.
Let $I=(a,b)$ be the connected component of $[0,1]\setminus A_0$ containing $p$.
Note that $\co_g^+(\partial I)\cap I=\emptyset$.
That is, $I$ is a ``nice interval''.
Let $G:I^*\to I$ be the first return map to $I$ by $g$, where $I^*=\{x\in I\,;\,\co_g^+(g(x))\cap I\ne\emptyset\}$.
Note that $\co_g^+(g(c_\pm))\cap I=\emptyset$.
Indeed, if $c\in\cc_\pm$ then $\overline{\co_g^+(g(c_\pm))}=\overline{\co_f^+(f(c_\pm))}\subset A_0$ and if $c\in\cc\setminus\cc_\pm$ then $\overline{\co_g^+(g(c_\pm))}\subset\{0,1\}$.
Therefore, $G$ is a full branch map.
It follows from $g$ being a non-singular map and from $p\in\omega_g(x)\cap I$ that 
$U_0:=U\cap I\sim\bigcap_{n\ge0}G^{-n}(U)$ $=$ $\{x\in U\cap I\,;\,\#(\co_g^+(x)\cap I)=\infty\}$ $\subset$ $\bigcap_{n\ge0}G^{-n}(I)$ is a  fat set. 
Hence, it follows from Lemma~\ref{LemmaINTERVALDICHOTOMY} that either (i) $\omega_G(x)=[a,b]$ generically on $(a,b)$ or (ii) $U\cap I\sim H$, where $H$ is a union of homtervals for $G$. As $\omega_G(x)=[a,b]$ implies that $\omega_g(x)$ is a cycle of interval, (i) cannot occur.
However, as every homterval for $G$ is a homterval for $g$, also (ii) is impossible (see Lemma~\ref{LemmaBaireXhomterval}), leading to a contradiction. Thus, we have that $A=A_0$.

Now we will prove that $A$ is a Cantor set, as we are assuming that $\interior(A)=\emptyset$. For that, we only need to show that $A$ does not admit isolated points.
Hence, assume by contradiction that $p\in A$ is a isolated point of $A$.
Let $I=(a,b)$ be the connected component of $[0,1]\setminus A$ containing $p$.
A $p\in \omega_g(x)$ generically on $U$ and $g$ is non-singular, then $\#\{x\in U\,;\,\co_g^+(x)\cap(a,p)\ne\emptyset\}=+\infty$ generically on $U$ or $\#\{x\in U\,;\,\co_g^+(x)\cap(p,b)\ne\emptyset\}=+\infty$  generically on $U$.
Suppose that $\#\{x\in U\,;\,\co_g^+(x)\cap(a,p)\ne\emptyset\}=+\infty$ generically on $U$, the other case is similar.

In this case, let $G:(a,p)^*\to(a,p)$ be the first return map to $(a,p)$ by $g$, where $(a,p)^*=\{x\in(a,p)\,;\,\co_g^+(g(x))\cap(a,p)\ne\emptyset\}$.
Thus, $U_0:=U\cap (a,p)\sim\bigcap_{n\ge0}G^{-n}(U)$ $=$ $\{x\in U\cap(a,p)\,;\,\#(\co_g^+(x)\cap(a,p))=\infty\}$ $\subset\bigcap_{n\ge0}G^{-n}((a,p))$ is a  fat set.
As a wandering interval of $G$ is a wandering interval of $g$, it follows from Lemma~\ref{LemmaINTERVALDICHOTOMY} and \ref{LemmaBaireXhomterval} that $\omega_G(x)=[a,p]$ generically on $(a,b)$. Thus, $\omega_G(x)=[a,p]$ generically on $U\cap(a,p)$. This implies that $\omega_g(x)$ is a cycle of interval generically on $U\cap(a,p)$ and so, by ergodicity, $\omega_g(x)=\omega_f(x)=A$ is a cycle of interval generically on $U$, contradicting our assumption that $\interior(A)=\emptyset$.
\end{proof}

\begin{Theorem}\label{TheoremFinTopInter}
Let $\cc$ be a finite subset of $(0,1)$ and 
$f:[0,1]\setminus\cc\to[0,1]$ a local homeomorphism.
If $f$ does not have wandering intervals and $\per(f)$ is a meager set, then 
either $\BB(f)$ is an open and dense subset of $[0,1]$ or there exists a finite collection of attractors $A_1,\cdots,A_\ell$, with $\ell\le\#\cc$, such that 
\begin{enumerate}
\item $[0,1]\setminus\BB(f)\sim\bigcup_{j=1}^{\ell}\beta_f(A_j)$;
\item for every $1\le j\le n$, $\omega_f(x)=A_j$ generically on $\beta_f(A_j)$;
\item each $A_j$ is either a Cantor set or a cycle of intervals;
\item if $A_j$ is a Cantor set then  $A_j=\bigcup_{v\in V}\overline{\co_f^+(v)}$, for some $V\subset\{f(c_\pm)\,;\,c\in\cc\}$ $($\footnote{ Indeed, $V=\{f(c_{\pm})\,;\,c\in\cc_{\pm}\}$, where $\cc_-=\{c\in\cc\,;\,c\in\overline{\co_f^+(x)\cap(0,c)}\}$ generically on $\beta_f(A_j)$ and $\cc_+=\{c\in\cc\,;\,c\in\overline{\co_f^+(x)\cap(c,1)}\}$ generically on $\beta_f(A_j)$.}$)$.
\end{enumerate}
\end{Theorem}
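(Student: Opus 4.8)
The plan is to reduce the statement to the machinery already assembled in this section: the $\Omega$-decomposition (Proposition~\ref{PropositionOMEGAdecomposition}) together with the attractor dichotomy (Proposition~\ref{PropositionAttractorDichotomy}). First I would observe that, since $f$ has no wandering intervals, every homterval $J$ of $f$ satisfies $J\subset\BB(f)\cup\co_f^-(\per(f))$ by the Homterval Lemma (Lemma~\ref{LemmaHomterval}). Writing $H(f)$ for the union of all homtervals, this gives $H(f)\subset\BB(f)\cup\co_f^-(\per(f))$, hence $H(f)\sim\BB(f)$ up to the meager set $\co_f^-(\per(f))$ (meager because $f$ is a non-singular local homeomorphism, $\cc$ is finite, and $\per(f)$ is meager, so its countable union of preimages is meager). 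Consequently $[0,1]\setminus H(f)\sim[0,1]\setminus\BB(f)$.

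Next I would split into the two cases of the dichotomy. If $[0,1]\setminus H(f)$ is \emph{not} fat, then $[0,1]\setminus\BB(f)$ is meager, so $\BB(f)$ is residual; since $\BB(f)$ is a countable union of basins of periodic-like attractors (Remark~\ref{RemarkPerLike} gives at most $2\#\cc$ of them), each such basin contains an open interval around the attracting side of the periodic-like point, and one checks that $\overline{\BB(f)}=[0,1]$ forces $\BB(f)$ to actually be open and dense — this is the first alternative of the theorem. (If there is a subtlety here, it is that ``residual'' must be upgraded to ``open and dense''; this should follow because the basin of a periodic-like attractor is open, so $\BB(f)$ is already open, and an open residual set is dense.)

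In the remaining case $[0,1]\setminus H(f)$ is fat. Then Proposition~\ref{PropositionOMEGAdecomposition} decomposes it into Baire ergodic components $U_1,\dots,U_\ell$ with $\ell\le\#\cc$. For each $U_j$, Proposition~\ref{PropositionAttractorDichotomy} furnishes a topological attractor $A_j$ with $\beta_f(A_j)\sim U_j$, with $\omega_f(x)=A_j$ generically on $U_j$ (hence item (2)), with $A_j$ a Cantor set or a cycle of intervals (item (3)), and with the explicit description of $A_j$ in the Cantor case (item (4), including the footnote identifying $V$ via $\cc_\pm$). Since $\bigcup_j U_j\sim[0,1]\setminus H(f)\sim[0,1]\setminus\BB(f)$, we get $[0,1]\setminus\BB(f)\sim\bigcup_{j=1}^\ell\beta_f(A_j)$, which is item (1). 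The $A_j$ are genuine attractors in Milnor's sense by Proposition~\ref{Propositiontop-ergodicAttractors}.

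The main obstacle I anticipate is not any single deep step — all the heavy lifting is done by Propositions~\ref{PropositionOMEGAdecomposition} and \ref{PropositionAttractorDichotomy} — but rather the bookkeeping needed to pass cleanly between ``$H(f)$'' and ``$\BB(f)$'' and to discard the meager exceptional sets coherently: one must make sure the equivalence $[0,1]\setminus H(f)\sim[0,1]\setminus\BB(f)$ is exactly what Proposition~\ref{PropositionOMEGAdecomposition} needs as input, and that the finitely many Baire ergodic components produced there are pairwise non-equivalent so that $\ell\le\#\cc$ is the genuine count of distinct attractors. A secondary point requiring a short argument is the promotion of ``$\BB(f)$ residual'' to ``$\BB(f)$ open and dense'' in the first alternative, which uses that each periodic-like basin is open.
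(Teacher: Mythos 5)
Your proposal is correct and follows essentially the same route as the paper: use the Homterval Lemma and the meagerness of $\co_f^-(\per(f))$ to obtain $H(f)\sim\BB(f)$, then apply Proposition~\ref{PropositionOMEGAdecomposition} to decompose $[0,1]\setminus\BB(f)$ into at most $\#\cc$ Baire ergodic components and Proposition~\ref{PropositionAttractorDichotomy} to produce the attractors with properties (1)--(4). The paper's version is terser (it simply asserts $H(f)\sim\BB(f)$ and leaves the ``open and dense'' alternative implicit), but the substance is identical; the only small point you leave unstated is the reverse inclusion $\BB(f)\setminus H(f)\subset\co_f^-(\cc)$, which is needed for the full equivalence and is also omitted in the paper.
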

\begin{proof}
If $\BB(f)$ is not a dense set then $[0,1]\setminus\BB(f)$ is a fat set. 
As $f$ does not have wandering intervals and $\per(f)\sim\emptyset$, we get that $H(f)\sim\BB(f)$, where $H(f)$ is the union of all homtervals of $f$.
Thus, it follows from Proposition~\ref{PropositionOMEGAdecomposition} that $[0,1]\setminus\BB(f)$ can be decomposed into at most $1\le\ell\le\#\cc$ Baire ergodic components, say $U_1,\cdots, U_{\ell}$. 

Now, applying  Proposition~\ref{PropositionAttractorDichotomy}, each $U_j$ has a topological attractor $A_j$ and $\omega_f(x)=A_j$ generically on $\beta_f(A_j)\sim U_j$ (proving item (2)).
As $[0,1]\setminus\BB(f)\sim\bigcup_{j=1}^{\ell}\beta_f(A_j)$, we get item (1).
Items (3) and (4) follows also from Proposition~\ref{PropositionAttractorDichotomy}. 
\end{proof}

\begin{Corollary}\label{CorollaryFinTopInter}Let $\cc$ be a finite subset of $(0,1)$ and 
$f:[0,1]\setminus\cc\to[0,1]$ a local homeomorphism.
If $f$ is differentiable and $\limsup_n|(f^n)'(x)|>1$ in a dense subset of $[0,1]$,
then 
there exists a finite collection of attractors $A_1,\cdots,A_\ell$, with $\ell\le\#\cc$, such that 
\begin{enumerate}
\item $[0,1]\sim\bigcup_{j=1}^{\ell}\beta_f(A_j)$;
\item for every $1\le j\le n$, $\omega_f(x)=A_j$ generically on $\beta_f(A_j)$;
\item each $A_j$ is either a Cantor set or a cycle of intervals;
\item if $A_j$ is a Cantor set then  $A_j=\bigcup_{v\in V}\overline{\co_f^+(v)}$, for some $V\subset\{f(c_\pm)\,;\,c\in\cc\}$.
\end{enumerate}
\end{Corollary}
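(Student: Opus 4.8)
The plan is to deduce the corollary from Theorem~\ref{TheoremFinTopInter} by verifying its two hypotheses: that $f$ has no wandering intervals and that $\per(f)$ is meager. Once these are established, Theorem~\ref{TheoremFinTopInter} gives the collection $A_1,\dots,A_\ell$ with $\ell\le\#\cc$ and items (2), (3), (4) verbatim; the only difference in the conclusion is that here $[0,1]\setminus\BB(f)\sim[0,1]$, i.e. $\BB(f)$ is meager (equivalently the dichotomy in Theorem~\ref{TheoremFinTopInter} must fall on the second alternative, and moreover $\BB(f)$ itself is negligible). So the real content is: under the hyperbolicity-on-a-dense-set hypothesis $\limsup_n|(f^n)'(x)|>1$ on a dense set, first, $f$ admits no wandering interval; second, $\per(f)$ is meager; and third, $\BB(f)$ is meager (in fact $\BB(f)=\emptyset$ after checking there are no periodic-like attractors, or at least that their basins are nowhere dense).

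First I would rule out wandering intervals. Suppose $I$ is a wandering interval; it is in particular a homterval, so $f^n|_I$ is a homeomorphism onto $f^n(I)$ for all $n$, and the intervals $f^n(I)$, $n\ge1$, are pairwise disjoint, hence $\length(f^n(I))\to0$. By the chain rule and the mean value theorem, for $x\in I$ one has $|(f^n)'(\xi_n)|=\length(f^n(I))/\length(I)\to0$ for suitable $\xi_n\in I$; combined with bounded distortion coming from the $C^2$ non-flat structure away from $\cc$ — here one uses that $I$ and all its iterates avoid $\cc$, so $f^n|_I$ is genuinely a composition of local diffeomorphisms with uniformly controlled distortion on the small intervals $f^j(I)$ — one gets $|(f^n)'(x)|\to0$ uniformly on $I$, in particular $\limsup_n|(f^n)'(x)|\le 1$ on the open set $I$, contradicting density of the hyperbolic set. (Alternatively, and more cleanly if distortion control is delicate here, one may simply invoke that the set $\{x:\limsup_n|(f^n)'(x)|>1\}$ is disjoint from any wandering interval, which follows from $\length(f^n(I))\to0$ together with the fact that on a wandering interval distortion of $f^n$ is bounded — this is the standard Koebe/cross-ratio argument available in the $C^2$ non-flat setting, cf.\ \cite{MvS}; but since the corollary is stated for a general differentiable local homeomorphism I would instead argue directly that a wandering interval forces $\limsup|(f^n)'|\le1$ on an open set.) Then I would show $\per(f)$ is meager: each periodic-like point of period $n$ lies in $\fix(f^n_\pm)$; at an attracting-side periodic-like point the derivative satisfies $|(f^n)'(p)|\le1$, and around a repelling one the point is isolated in $\per(f)$, so $\per(f)$ is a countable union of sets each either finite or contained in $\{x:\limsup_n|(f^n)'(x)|\le1\}$, which is nowhere dense by hypothesis; hence $\per(f)$ is meager.

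With no wandering intervals and $\per(f)$ meager, Theorem~\ref{TheoremFinTopInter} applies. It remains to eliminate the first alternative (that $\BB(f)$ is open and dense) and to upgrade "$[0,1]\setminus\BB(f)\sim\bigcup\beta_f(A_j)$" to "$[0,1]\sim\bigcup\beta_f(A_j)$". For this I would show $\BB(f)$ is meager. A periodic-like attractor $A$ with period $n$ has, on its immediate basin $(p-r,p)$ (or $(p,p+r)$), $f^{nj}(x)\to p$ for all $x$, whence $|(f^{n})'(p_\pm)|\le1$ and, by continuity of the derivative, $|(f^{nk})'(x)|$ stays bounded on a neighborhood, so $\limsup_n|(f^n)'|\le1$ throughout $\beta_f(A)$ — wait, one must be careful because points land in $\beta_f(A)$ after finitely many steps and the derivative could be large in transit; the clean statement is that on the immediate basin $\limsup_n|(f^n)'(x)|\le1$, and the full basin $\beta_f(A)=\bigcup_k f^{-k}(\text{immediate basin})$ is, up to the countable set $\co_f^-(\cc)$, a countable union of open sets on each of which $\limsup_n|(f^n)'|\le1$ (pull back by the local diffeomorphism branches of $f^k$, whose derivatives are finite constants on each component). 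Hence $\BB(f)$ is contained, modulo a countable set, in the nowhere-dense set $\{x:\limsup_n|(f^n)'(x)|\le 1\}$, so $\BB(f)$ is meager. This forces the second alternative of Theorem~\ref{TheoremFinTopInter}, and since $\BB(f)$ is meager, $[0,1]\setminus\BB(f)\sim[0,1]$, giving item (1); items (2)--(4) are inherited directly from Theorem~\ref{TheoremFinTopInter}.

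The main obstacle I anticipate is the wandering-interval step in the generality stated: a bare "differentiable local homeomorphism" need not enjoy bounded distortion, so one cannot immediately pass from $\length(f^n(I))\to0$ to $|(f^n)'(x)|\to 0$ pointwise on $I$. The resolution is that one does not need pointwise decay everywhere — it suffices to find a single $x\in I$ (or a dense-in-$I$ set of $x$) where $\limsup_n|(f^n)'(x)|\le1$, and at the same time $I$ being open contradicts density of the hyperbolic set only if the hyperbolic set misses *all* of $I$. So the careful argument is: the hyperbolic set $D=\{x:\limsup_n|(f^n)'(x)|>1\}$ is dense, hence meets $I$; pick $x_0\in D\cap I$; then $\limsup_n|(f^n)'(x_0)|>1$, yet $\length(f^n(I))\ge \int_{x_0'}^{x_0}|(f^n)'|\,$ considerations... — this needs the MVT to produce a point $\xi_n$ with $|(f^n)'(\xi_n)|=\length(f^n(I))/\length(I)\to0$, which controls *some* point of $I$ but not necessarily $x_0$. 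To close the gap one really does want a distortion bound, valid here because the relevant compositions are of $C^1$ (indeed $C^2$) diffeomorphisms restricted to the shrinking intervals $f^j(I)$ with summable lengths, so $\sum_j \mathrm{var}_{f^j(I)}\log|f'|<\infty$ and distortion of $f^n|_I$ is uniformly bounded; this is the one place where the $C^2$ hypothesis in the ambient setup (rather than mere differentiability) is used, and I would state it as a lemma or cite the Homterval/distortion machinery of \cite{MvS}.
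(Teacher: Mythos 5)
Your strategy agrees with the paper's: verify the two hypotheses of Theorem~\ref{TheoremFinTopInter} (no wandering intervals, $\per(f)$ meager) and then show $\BB(f)$ is negligible. The paper's own proof is much terser: it simply notes that $\BB(f)$ and any wandering interval are open sets on which $\lim_n|(f^n)'(x)|$ is $<1$ (resp.\ $\le1$), hence both must be empty because they would otherwise meet the dense hyperbolic set; and that if $\per(f)$ were fat, some $\fix(f^n)$ would be a fat closed set, hence would contain an open interval on which $(f^n)'\equiv1$, again contradicting density.

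The one genuine flaw in your write-up is the repeated claim that ``$\{x:\limsup_n|(f^n)'(x)|\le1\}$ is nowhere dense by hypothesis.'' This is false: the hypothesis only says the \emph{complement} of that set is dense, and a set with dense complement need not be nowhere dense (the irrationals are the complement of the dense set $\QQ$ yet are not nowhere dense). In each place you invoke it you should instead use the fact that the set you want to rule out is \emph{open}, so that being disjoint from a dense set forces it to be empty --- which is exactly what the paper does, and which you almost say yourself when you observe that $\BB(f)$ is a union of open sets. Relatedly, your argument for $\per(f)$ meager is off: a repelling periodic point is isolated in $\fix(f^n)$ for a fixed $n$, but there is no reason it should be isolated in $\per(f)=\bigcup_n\fix(f^n)$, and the ``attracting versus repelling'' dichotomy ignores neutral and one-sided cases; the paper's route via a fat closed $\fix(f^n)$ having interior is cleaner and avoids both problems. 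Finally, your worry about distortion in the wandering-interval step is a fair observation as a matter of rigor --- the mean value theorem alone produces decay at \emph{some} point, not at a fixed one --- but note that the paper does not appeal to distortion either; it simply asserts $\lim_n|(f^n)'(x)|\le1$ on any wandering interval and applies openness plus density, leaving the justification of that assertion implicit in the bare ``differentiable local homeomorphism'' setting.
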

\begin{proof}
If $\per(f)$ is a fat set, then $\fix(f^n)$ is a fat set for some $n\in\NN$.
As $f$ is continuous, we get that $\fix(f^n)$ contains an open interval $I$ and so, $|(f^n)'(x)|=1$ for every $x\in I$, contradicting our hypothesis.
Thus, $\per(f)$ is a meager set.
As $\BB(f)$ is an open set and $\lim_n|(f^n)'(x)|<1$ for every $x\in\BB(f)$, we get that $\BB(f)=\emptyset$.
Likewise, as $\lim_n|(f^n)'(x)|\le1$ for every $x$ in a wandering interval, $f$ does not admit wandering intervals.
Therefore, $f$ satisfies the hypothesis of Theorem~\ref{TheoremFinTopInter}. 
\end{proof}


Proposition~\ref{PropositionWanderingIntervals} below appears implicitly in the proof of Theorem~A in \cite{BPP19}.

\begin{Proposition}[Wandering intervals attractors, \cite{BPP19}]\label{PropositionWanderingIntervals}
Let $f:[0,1]\setminus\cc\to[0,1]$ be a $C^2$ non-flat local diffeomorphism, with $\cc\subset[0,1]$ finite. There is $1\le\ell\le2^{2\#\cc}$ and a finite number of compact invariant sets $A_1,\cdots,A_{\ell}\subset[0,1]$ such that $\omega_f(I)=A_j$, for some $1\le j\le \ell$, whenever $I$ is wandering interval for $f$. Moreover, for each $1\le j\le\ell$,
\begin{enumerate}
\item $A_j\cap\cc\ne\emptyset$;
\item $A_j=\bigcup_{v\in V}\overline{\co_f^+(v)}$, for some $V\subset\{f(c_\pm)\,c\in\cc\}$.
\end{enumerate}
\end{Proposition}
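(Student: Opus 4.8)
The plan is to analyze the asymptotic behavior of a wandering interval $I$ and show that its $\omega$-limit set is forced to be one of finitely many sets, each controlled by the forward orbits of the one-sided critical values $f(c_\pm)$. First I would observe that since $I$ is a wandering interval, the iterates $f^n(I)$ are pairwise disjoint intervals, so $\sum_n |f^n(I)| \le 1$ and in particular $|f^n(I)|\to 0$. Hence $\omega_f(I):=\bigcap_{N}\overline{\bigcup_{n\ge N}f^n(I)}$ is a well-defined nonempty compact forward invariant set, and any two wandering intervals $I,I'$ whose forward orbits eventually interlace (or one enters a neighborhood of the other) have the same $\omega$-limit. The key classical input, available from the $C^2$ non-flat hypothesis via the Koebe principle / cross-ratio (distortion) estimates as in \cite{MvS,BPP19}, is that a wandering interval cannot stay bounded away from $\cc$: if $\overline{\co_f^+(I)}\cap\cc=\emptyset$ then $f$ would have bounded distortion along the orbit of a slightly larger interval, and the standard argument (contradiction with $\sum|f^n(I)|<\infty$, or with the absence of homtervals off $\per(f)\cup\BB(f)$) shows this is impossible. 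Therefore $\omega_f(I)\cap\cc\ne\emptyset$, which gives item~(1).

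Next I would pin down the structure in item~(2). Once $c\in\omega_f(I)\cap\cc$, a subsequence $f^{n_k}(I)$ accumulates on $c$, and since the $f^{n_k}(I)$ are tiny intervals, for each $k$ the interval $f^{n_k}(I)$ lies entirely in $(c-\varepsilon,c)$ or entirely in $(c,c+\varepsilon)$ (it cannot straddle $c$, as $f$ is defined off $\cc$ and $I$ is a homterval so its iterates never hit $\cc$). Passing to a further subsequence we may assume one side, say the left, occurs infinitely often; then $f^{n_k+1}(I)\to f(c_-)$, so $f(c_-)\in\omega_f(I)$, and by forward invariance $\overline{\co_f^+(f(c_-))}\subset\omega_f(I)$. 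Doing this at every critical point that $\omega_f(I)$ meets, and recording for each such $c$ the set $S(I)\subset\{c_-,c_+\}$ of sides approached infinitely often, we get $\bigcup_{c_\pm\in S(I)}\overline{\co_f^+(f(c_\pm))}\subset\omega_f(I)$. The reverse inclusion is the substantive point: I would argue that every point $p\in\omega_f(I)$ is accumulated by iterates $f^{n}(I)$, trace each such iterate back through the last time its orbit passed near some $c$ on the recorded side (using that between consecutive near-passages to $\cc$ the map $f^m$ restricted to the relevant pullback is a homeomorphism with controlled distortion, hence $f^n(I)$ shadows the orbit of the corresponding one-sided critical value), and conclude $p\in\overline{\co_f^+(f(c_\pm))}$ for some $c_\pm\in S(I)$. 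Thus $\omega_f(I)=\bigcup_{v\in V_I}\overline{\co_f^+(v)}$ with $V_I=\{f(c_\pm): c_\pm\in S(I)\}\subset\{f(c_\pm):c\in\cc\}$.

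Finally, the finiteness count: the data determining $\omega_f(I)$ is the subset $V_I$ of the set $\{f(c_-),f(c_+):c\in\cc\}$, which has at most $2\#\cc$ elements, so there are at most $2^{2\#\cc}$ possible values of $V_I$ and hence at most $\ell\le 2^{2\#\cc}$ distinct sets $A_j=\omega_f(I)$ among all wandering intervals; discarding those $V$ that never arise gives $1\le\ell\le 2^{2\#\cc}$, and each arising $A_j$ automatically satisfies (1) and (2) by the above. The main obstacle I anticipate is making the reverse inclusion $\omega_f(I)\subset\bigcup_{v\in V_I}\overline{\co_f^+(v)}$ fully rigorous: this requires the careful distortion/shadowing bookkeeping along the wandering orbit — controlling the pullbacks of a definite neighborhood of $f^n(I)$ back to the last critical passage and showing the resulting comparison with the critical value's orbit does not degrade — which is exactly the technical heart of the wandering-interval analysis in \cite{MvS,BPP19} and the reason the statement is attributed to being implicit there.
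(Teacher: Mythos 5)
Your outline correctly identifies the structural skeleton of the result: that $\omega_f(I)$ accumulates on $\cc$, that only the ``active'' one-sided critical values $f(c_\pm)$ (those whose sides the orbit of $I$ accumulates on) matter, that the easy inclusion $\bigcup_{v\in V_I}\overline{\co_f^+(v)}\subset\omega_f(I)$ follows from one-sided accumulation plus forward invariance, and that the count $\ell\le 2^{2\#\cc}$ comes from choosing a subset of $\{f(c_\pm)\,;\,c\in\cc\}$. The finiteness bookkeeping at the end is exactly right.

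The genuine gap is the one you flag yourself: the reverse inclusion $\omega_f(I)\subset\bigcup_{v\in V_I}\overline{\co_f^+(v)}$. Your sketch (``trace each iterate back through the last time its orbit passed near some $c$ on the recorded side'' and use distortion control between critical passages) is essentially an attempt to re-derive and simultaneously sharpen the main distortion theorem underlying the wandering-interval theory, and you do not give the argument — you correctly call it the technical heart. The difficulty is not cosmetic: one must rule out, for instance, long excursions away from $\cc$ where the backward tracing never reaches a near-critical passage, and must make the shadowing comparison with $\co_f^+(f(c_\pm))$ quantitative and uniform in $n$. Without this the proposal does not close.

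The paper's route avoids that entirely, and it is worth noting because it is genuinely more economical than what you propose. After normalizing so that $f(\{0,1\})\subset\{0,1\}$ and $\Omega(f)\cap(0,1)\subset[\alpha,\beta]$, and replacing $I$ by a forward iterate so that $\overline{f^n(I)}\cap\cc=\emptyset$ for all $n$, the paper introduces the sets $\cc_\pm(I)$ of critical points accumulated from the left/right by $\co_f^+(I)$. On small one-sided neighborhoods $(a_c,c)$ and $(c,b_c)$ of the \emph{inactive} sides — neighborhoods disjoint from $\co_f^+(I)$ — it modifies $f$ to a new $C^2$ non-flat map $g$ sending those inactive critical values into the invariant set $\{0,1\}$, while leaving $f$ untouched on $\co_f^+(I)$. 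Thus $g^n(I)=f^n(I)$ for all $n$, so $I$ is still a wandering interval for $g$ and $\omega_g(I)=\omega_f(I)\subset(0,1)$. Now it applies as a black box Proposition~6 of \cite{BPP19} — $\omega_g(I)\subset\overline{\co_g^+(\cv_g)}$, the coarse inclusion into the closure of \emph{all} critical-value orbits — to $g$ rather than $f$. Because the inactive critical values of $g$ have orbits inside $\{0,1\}$, which is disjoint from $\omega_g(I)$, the coarse inclusion automatically collapses to only the active sides, giving exactly the sharp $\subset$ that you were trying to prove from scratch. The $\supset$ inclusion and the translation back to $f$ are then as in your sketch. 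In short: where you would re-run the cross-ratio/Koebe machinery, the paper instead re-routes the unwanted critical orbits out of the way by a local modification, so that a single citation does the sharp work. If you want to complete your proposal, either adopt this modification trick or explicitly invoke and adapt the distortion estimates from \cite{MvS,BPP19}; as it stands, step four of your plan is an open claim, not a proof.
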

\begin{proof}
As, if necessary, one can always extend $f$ to a $C^2$ map $g$ defined on bigger interval $J:=[a,b]\subset[0,1]$ so that $g(\partial J)\subset\partial J$ and $\omega_g(x)\in[0,1]$ for every $x\in(a,b)$ (see Figure~\ref{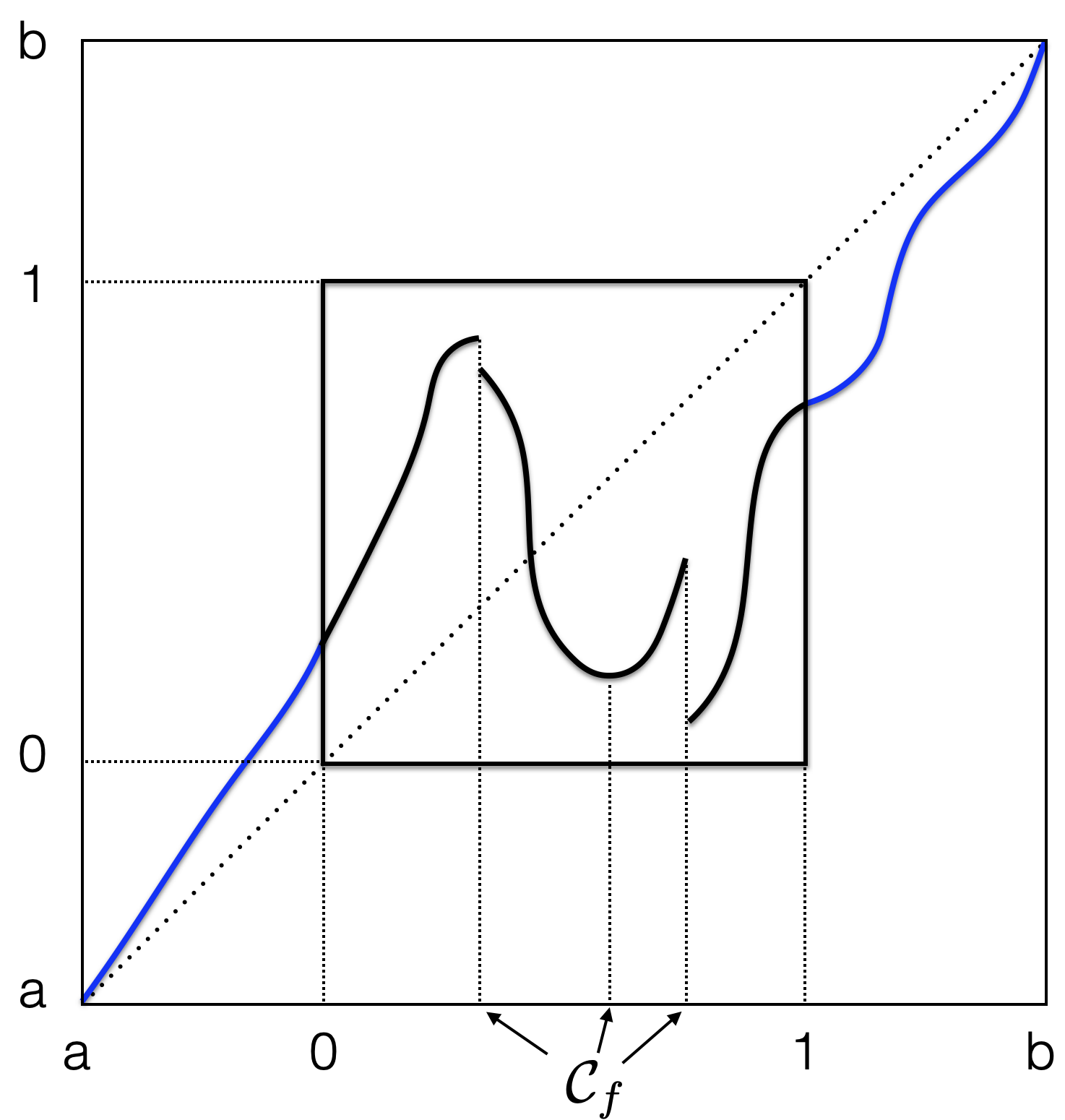}), we may assume $f(\{0,1\})\subset\{0,1\}$ and that  $\exists\,0<\alpha<\beta<1$ such that $\Omega(f)\cap(0,1)\subset[\alpha,\beta]$.
\begin{figure}
\begin{center}\includegraphics[scale=.19]{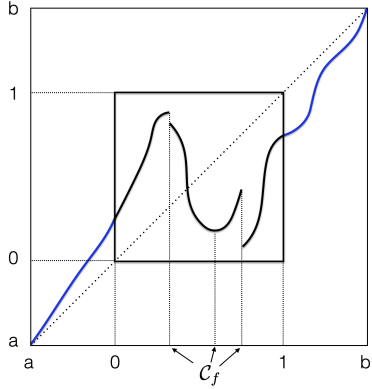}
\caption{One can see an extension of an original map $f$ defined on $[0,1]$ to a bigger interval $[a,b]$ in order to obtain an invariant boundary.} \label{01PARAab.png}
\end{center}
\end{figure}

Let $I$ be a wandering interval. As $\omega_f(I)=\omega_f(f^n(I))$ for every $n\ge0$, changing $I$ for $f^{\ell}(I)$ if necessary, we may assume that $\overline{f^n(I)}\cap\cc=\emptyset$ for every $n\ge0$.
Let $\cc_-(I)=\{c\in\cc\,;\,c\in\overline{(0,c)\cap\co_g^+(I)}\}$ and $\cc_+(I)=\{c\in\cc\,;\,c\in\overline{(c,1)\cap\co_g^+(I)}\}$.
For each $c\in\cc\setminus\cc_-(I)$, let $0<a_c<c$ be such that $\co_f^+(I)\cap(a_c,c)=\emptyset$.
Similarly, for each $c\in\cc\setminus\cc_+(I)$, let $c<b_c<1$ be such that $\co_f^+(I)\cap(c,b_c)=\emptyset$.
Let $$U=\bigg(\bigcup_{c\in\cc\setminus\cc_-(I)}(a_c,c)\bigg)\cup\bigg(\bigcup_{c\in\cc\setminus\cc_+(I)}(c,b_c)\bigg).$$ 
Let $g:[0,1]\setminus\cc\to[0,1]$ be a $C^2$ non-flat local diffeomorphism such that $g|_{([0,1]\setminus\cc)\setminus U}=f|_{([0,1]\setminus\cc)\setminus U}$ and $g(c_\pm)\in\{0,1\}$ for every $c\in\cc\setminus\cc_{\pm}(I)$ (see Figure~\ref{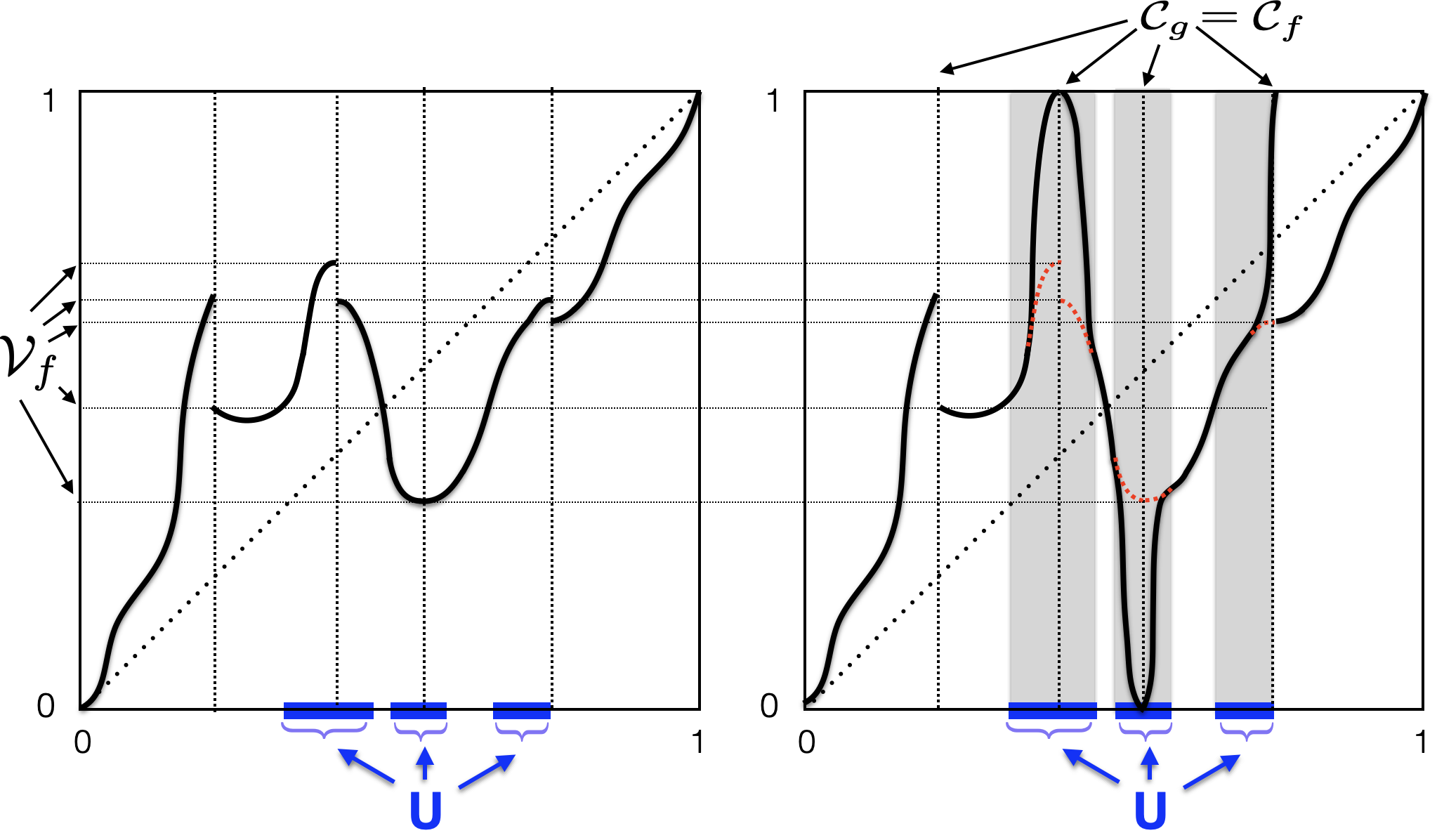}). 
Note that $f^n(I)=g^n(I)$ for every $n\ge0$ and so, $I$ is a wandering interval for $g$ and $\omega_f(I)=\omega_g(I)$. 

\begin{figure}
\begin{center}\includegraphics[scale=.3]{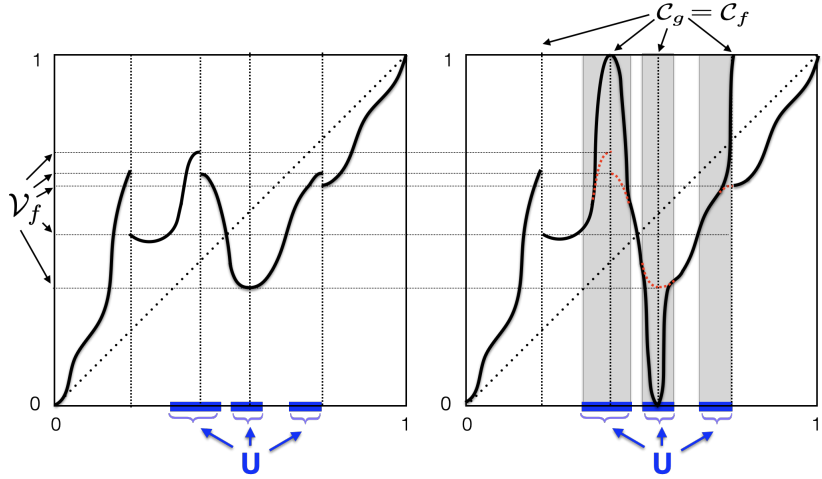}
\caption{On the right one can see the original map $f$ of the proof of Proposition~\ref{PropositionWanderingIntervals} and the modified map $g$ can be seen on the left side}\label{fPARAg.png}
\end{center}
\end{figure}

As $g(\{0,1\})=f(\{0,1\})\subset\{0,1\}$, we have that $\co_g^+(g(c_\pm))\subset\{0,1\}$ for every $c\in\cc\setminus\cc_\pm(I)$.
Hence, as $\omega_g(I)\subset\Omega(g)\cap(0,1)\subset[\alpha,\beta]\subset(0,1)$,
it follows from Proposition~6 of \cite{BPP19} (\footnote{\begin{prop}[Proposition~6 of \cite{BPP19}]
Given a $C^2$ non-flat local diffeomorphism $f:[0,1]\setminus\cc\to[0,1]$, $\cc\subset[0,1]$ is a finite set, let $\cv_f=\{f(c_{\pm})\,;\,c\in\cc\}$ be the set for critical values of $f$.
If $I$ is a wandering interval then $\omega_f(I)\subset\overline{\co_f^+(\cv_f)}$.
\end{prop}}) applied to $I$ and $g$ that $$\omega_g(I)\subset\bigg(\bigcup_{c\in\cc_-(I)}\overline{\co_g^+(g(c_-))}\bigg)\cup\bigg(\bigcup_{c\in\cc_+(I)}\overline{\co_g^+(g(c_+))}\bigg).$$
On the other hand, if $c\in\cc_-(I)$ then $g(c_-)\in\omega_g(I)$ and so, $\overline{\co_g^+(g(c_-))}\subset\omega_g(I)$. Similarly, $c\in\cc_+(I)$ $\implies$ $\overline{\co_g^+(g(c_+))}\subset\omega_g(I)$.
Therefore,  $$\omega_g(I)=\bigg(\bigcup_{c\in\cc_-(I)}\overline{\co_g^+(g(c_-))}\bigg)\cup\bigg(\bigcup_{c\in\cc_+(I)}\overline{\co_g^+(g(c_+))}\bigg).$$
As $\co_f^+(I)\cap U=\emptyset$, as well as $\co_f^+(f(c_\pm))\cap U=\emptyset$ $\forall c\in\cc_{\pm}(I)$, we get that 
$$\omega_f(I)=\bigcup_{v\in V}\overline{\co_f^+(v)},$$
where $V\subset\cv_f:=\{f(c_{\pm})\,;\,c\in\cc\}$ is given by $V=\{f(c_-)\,;\,c\in\cc_-(I)\}\cup\{f(c_+)\,;\,c\in\cc_+(I)\}$.
Thus, take $\mathfrak{A}=\{\bigcup_{s\in S}\overline{\co_f^+(s)}\,;\,S\in 2^{\cv_f}\}$ and let $A_1,\cdots,A_{\ell}$ be the set of $A\in\mathfrak{A}$ such that $\omega_f(I)=A$ for some wandering interval $I$.
As $\#\cv_f\le2\#\cc$, we conclude the proof of the proposition.
\end{proof}

\begin{proof}[\bf Proof of Theorem~\ref{TheoremFinTopInterC2}]
If $\BB(f)$ is not an open and dense set, the $[0,1]\setminus\BB(f)$ is a fat set. Let $\BB_1(f)$ be the union of all wandering intervals of $f$. If $\BB_1(f)\ne\emptyset$ let $\mathfrak{A}=\{ A_1',\cdots,A_{\ell}'\}$ ($\ell\le2^{2\#\cc}$)  be the topological attractors given by Proposition~\ref{PropositionWanderingIntervals}, otherwise let $\mathfrak{A}=\emptyset$.
If $\Omega(f)$ is a meager set, let $\mathfrak{B}=\emptyset$.
If $\Omega(f)$ is fat, it follows from Proposition~\ref{PropositionOMEGAdecomposition} that $\Omega(f)$ can be decomposed into at most $\#\cc$ ergodic components, say $U_1,\cdots,U_m$.
By Proposition~\ref{Propositiontop-ergodicAttractors}, for each $1\le j\le m$ there exists a topological attractor $A_1'',\cdots A_m''$ such that $U_j\sim\beta_f(A_j'')$ and $\omega_f(x)=A_j''$ generically on $U_j$.
So, if $\Omega(f)$ is not a meager set, let $\mathfrak{B}=\{A_1'',\cdots,A_m''\}$.
Hence, taking $\{A_1,\cdots,A_n\}=\mathfrak{A}\cup\mathfrak{B}$, we conclude the proof of $[0,1]\setminus\BB(f)\sim\bigcup_{j=1}^{\ell}\beta_f(A_j)$ and of items (1) and (2).
The proof of items (3), (4) and (5) follows from Proposition~\ref{PropositionAttractorDichotomy}.
\end{proof}

We observe that there are many basic open questions about attractors for maps of the intervals with discontinuities. One of them is the transitivity of those attractors. All the attractors for $C^2$ maps are transitive, but even for infinitely many renormalizable maps with discontinuities this is not clear, see a discussion about it in \cite{BPP20}.
Thus, to avoid more delicate problems, we focus in Theorem~\ref{TheoremUdecompodition} below only on $C^2$ non-degenerated maps.

There are three types of attractors for smooth interval maps: an attracting period orbit, a minimal (and uniquely ergodic) Cantor set and a cycle of interval (see Corollary~2.2 of \cite{BL}).
A {\em cycle of intervals} is a union of compact disjointed intervals $I_1,\cdots,I_t$ such that $f$ is transitive in $\bigcup_{j=1}^tI_j$ and, as we can see in Proposition~\ref{PropCyleIsStTr} below, it is a strongly transitive set for continuous maps. 

\begin{Proposition}\label{PropCyleIsStTr}
If $f:[0,1]\to[0,1]$ is continuous and $J=I_1\cup\cdots\cup I_s$ is a cycle of intervals then $f|_{J}$ is strongly  transitive.
\end{Proposition}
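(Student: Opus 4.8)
The plan is to recall the precise meaning of "strongly transitive": a compact invariant set $J$ is strongly transitive for $f$ if for every nonempty relatively open $V\subset J$ one has $\bigcup_{n\ge0}f^n(V)=J$ (equivalently, $\bigcup_{n\ge0}f^n(V)\supset J$). So we must show that the forward image of any open subinterval of $J$ eventually covers all of $J=I_1\cup\cdots\cup I_s$. The starting point is the hypothesis that $f|_J$ is transitive, i.e.\ there is a point $x_0\in J$ with $\overline{\co_f^+(x_0)}=J$ (or, equivalently in this interval setting, that $\bigcup_{n\ge0}f^n(W)$ is dense in $J$ for every nonempty open $W\subset J$). Since a transitive continuous interval map of a cycle of intervals permutes the $I_k$ cyclically (say $f(I_k)\subset I_{k+1}$, indices mod $s$), and since $f^s$ is transitive on each $I_k$, it suffices to treat the case $s=1$, i.e.\ $f:I\to I$ continuous and transitive on a single compact interval $I=[a,b]$, and then reassemble.

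The key steps, in order, are as follows. \emph{Step 1.} Reduce to $s=1$ by the cyclic permutation remark: if $V\subset J$ is open, some $f^j(V)$ meets a single $I_k$ in a nonempty open set, and showing $\bigcup_m (f^s|_{I_k})^m$ of that set equals $I_k$ gives, after applying $f,\dots,f^{s-1}$, that $\bigcup_{n\ge0}f^n(V)\supset I_1\cup\cdots\cup I_s=J$. \emph{Step 2.} For a transitive continuous map $f$ of a compact interval $I=[a,b]$, take a nonempty open subinterval $W\subset I$. Because $f$ is transitive, $\bigcup_{n\ge0}f^n(W)$ is dense in $I$. \emph{Step 3.} Each $f^n(W)$ is a subinterval of $I$ (continuous image of an interval), and $f^n(W)\subset f^{n+1}(W)$ is \emph{false} in general, so instead consider the increasing sequence of intervals $J_N:=\bigcup_{n=0}^{N}f^n(W)$; each $J_N$ need not be an interval, but I will instead argue with the chain $W\subset f(W)\cup W$ by first enlarging $W$ to an $f$-invariant neighborhood. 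Concretely: transitivity implies that for the open set $G_N:=\bigcup_{n\ge N}f^n(W)$ one has $G_N$ dense (tail of a dense orbit-image is still dense), hence $\bigcup_{n\ge0}f^n(W)=\bigcup_{n\ge0}f^n(W)$ contains a set whose image under $f$ returns into it; using that $f$ maps intervals to intervals and that a dense union of intervals in $[a,b]$ whose "reach" is forward-invariant must, by a standard connectedness/covering argument, be all of $[a,b]$. \emph{Step 4.} Make Step 3 rigorous: let $O=\interior(\bigcup_{n\ge0}f^n(W))$; then $O$ is open, dense, forward-invariant ($f(O)\subset\bigcup_{n\ge1}f^n(W)\subset \overline O$), and its complement $I\setminus O$ is closed, nowhere dense, and "almost" forward-invariant; if $I\setminus O\ne\emptyset$ pick a component $[c,d]$ of $I\setminus O$ — by density of $O$ it is a single point $c=d$ or has empty interior, and one shows such a persistent gap contradicts transitivity (the orbit of a point cannot be dense if it must jump over the fixed gap), forcing $O=I$, i.e.\ $\bigcup_{n\ge0}f^n(W)$ is dense \emph{and open}; finally use compactness of $I$ together with the fact that an open dense subinterval-union that is forward-invariant and whose complement is $f$-periodic-point-free must exhaust $[a,b]$.

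I expect the main obstacle to be Step 3/4: passing from "$\bigcup_n f^n(W)$ is dense" (which is immediate from transitivity) to "$\bigcup_n f^n(W)=J$" (the full-covering required by strong transitivity). The subtlety is that forward images of an interval need not be nested, so one cannot simply take a monotone limit; the argument must exploit that $f$ is a continuous \emph{interval} map, so each $f^n(W)$ is an interval, and that the union of a countable family of intervals in $[a,b]$ which is dense and forward-invariant under $f$ leaves no room for a persistent complementary gap — any such gap would be mapped into the gap or across it in a way incompatible with a dense orbit. I would likely phrase this via the endpoints: if $p$ is an endpoint of a component of the complement, track $f^n(p)$ and use continuity plus transitivity to derive a contradiction, or alternatively invoke the classical fact (Barge–Martin / standard one-dimensional dynamics) that a transitive continuous interval map is strongly transitive, citing it and giving the short endpoint argument as the proof. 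The reassembly in Step 1 is routine once Step 4 is in hand.
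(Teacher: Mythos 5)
Your Step 1 (reduce to the first return map $F=f^s$ on $I_1$ and show $\bigcup_{n\ge0}F^n(W)=I_1$ for every nonempty open $W\subset I_1$) matches the paper's opening move. The genuine gap is in Steps 3--4, which you yourself flag as the obstacle. Your proposed contradiction --- ``a persistent gap $[c,d]$ in the complement contradicts density of orbits because the orbit would have to jump over it'' --- does not work as stated: since $U:=\bigcup_n F^n(W)$ is open and dense, its complement is nowhere dense, so every component of the complement is a single point, and a dense orbit has no trouble hopping over a nowhere dense set of points. Density of $U$ is free from transitivity; the whole content of the proposition is precisely the passage from ``dense union of intervals'' to ``all of $I_1$,'' and no argument you give closes that passage. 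Falling back on ``invoke the classical fact'' is not a proof here, since this proposition \emph{is} that classical fact, stated for the cycle-of-intervals setting the paper needs.

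The paper's route is genuinely different from yours, and it is worth seeing why it succeeds where the gap/endpoint idea stalls. Rather than look at the complement of $U$, the paper looks at the \emph{connected components of $U$}. Taking $V$ the component containing $W$, forward invariance of $U$ forces each $F^n(V)$ into a single component, and transitivity produces a minimal $\ell\ge1$ with $F^{\ell}(V)\subset V$; hence $U$ is a finite union $V\cup F(V)\cup\cdots\cup F^{\ell-1}(V)$. One then passes to the closed interval $[\alpha,\beta]=\overline{V}$ and the continuous extension $\overline{F}=F^{\ell}|_{[\alpha,\beta]}$. If $U\ne I_1$, some boundary point of $V$ lies outside $U$, which forces an endpoint of $[\alpha,\beta]$ to be a period-$\le 2$ fixed point of $\overline F$ that is not attained from the interior of $V$; transitivity of $\overline F^2$ then makes that endpoint a one-sided topological repeller whose basin misses a full neighborhood, contradicting transitivity. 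So the decisive step is a periodicity/repeller analysis at the boundary of a \emph{component of $U$}, not at a complementary gap. Your instinct to ``track $f^n(p)$ at an endpoint'' points in the right direction, but without the finite-cycle structure on the components and the extension $\overline F$ there is nothing to pin the endpoint down against; you would need to supply that structure to complete the argument.
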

\begin{proof}
As $f$ is continuous $f(J)\subset J$, $I_1,\cdots,I_s$ is a collection of disjoints compact intervals and $f|_J$ is transitive, it follows that the first return map to $I_1$, $F:I_1\to I_1$, is given by $f^s$ and it is a transitive map.
As $I_1$ is compact and $F$ continuous, $F(I_1)\subset I_1$ is compact and, as $F$ is transitive, $F(I_1)=I_1$.
Let $(a,b)\subset I_1$.
If  $U=\bigcup_{n\ge0}F^n((a,b))\ne I_1$, let $V$ be a connect component of $U$.
As $F(U)\subset U$, for any $n\ge0$, either $F^n(V)\subset V$ or $F^n(V)\cap V=\emptyset$.
By transitiveness, there is a smaller $\ell\ge1$ such that $F^{\ell}(V)\subset V$.
This implies that $U=V\cup F(V)\cup\cdots F^{\ell-1}(V)$.

Let $\overline{F}$ be the continuous extension of $F$ to $[\alpha,\beta]:=\overline{V}$, that is,
 $\overline{F}:[\alpha,\beta]\to[\alpha,\beta]$ is given by $\overline{F}(x)=f^{\ell}(x)$.
As $\overline{F}$ is surjective and at least one point of $\partial V$ does not belong to $U$ (otherwise $U=I_1$), we get that $\{\alpha,\beta\}\setminus F((\alpha,\beta))\supset \{\alpha,\beta\}\setminus F(V)\ne\emptyset$ and this implies $\overline{F}^2(\alpha)=\alpha$ or $\overline{F}^2(\beta)=\beta$.   
Let us assume that $\overline{F}^2(\alpha)=\alpha$, the other case is similar.
In this case, we have that $\alpha\notin\overline{F}^2(V)$. 

Thus, it follows from the transitivity of
  $\overline{F}^2$ that there is $\varepsilon>0$ such that $\overline{F}^2(x)>x$ for every $\alpha<x<\alpha+\varepsilon$.
That is, $\alpha$ is a (topologically) repeller fixed points for $\overline{F}^2$.
This, together with the continuity of $\overline{F}^2$ and $\alpha\notin\overline{F}^2(V)$, implies that exists $r>0$ such that $\omega_{\overline{F}^2}(x)\subset[\alpha+r,\beta]$ for every $x\in(\alpha,\beta]$, breaking the transitivity of $\overline{F}^2$.
To avoid this contradiction, we must have $U=I_1$, proving that $F$ is strongly transitive. As a consequence, $f|_J$ is also strongly transitive. 
\end{proof}

A direct consequence of Theorem~B of \cite{Pi21}, Proposition~\ref{PropCyleIsStTr} above and the fact  that the periodic points are dense in any cycle of intervals for a continuous map $f$ is a formula for calculating the (upper) Birkhoff average for continuous function along the orbit of generic points in a cycle of intervals. 

\begin{Corollary}\label{CorollaryBirkForCycle}
If $J$ is a cycle of intervals for a continuous map $f:[0,1]\to[0,1]$ and $\varphi\in C([0,1],\RR)$ then $$\limsup\frac{1}{n}\sum_{j=0}^{n-1}\varphi\circ f^j(x)=\max\bigg\{\int\varphi d\mu\,;\,\mu\in\cm^1(f|_J)\bigg\}$$ and
$$\omega_f(x)=\omega_f^*(x)=J$$generically on $\beta_f(J)\sim\co_f^{-}(J)\subset\beta_f(J)$.
\end{Corollary}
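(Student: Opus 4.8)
The plan is to deduce this corollary by combining three ingredients that are all stated just before it: Proposition~\ref{PropCyleIsStTr} (a cycle of intervals is strongly transitive), the density of periodic points in a cycle of intervals for a continuous map, and Theorem~B of \cite{Pi21}. The strategy is to recognize $f|_J$ as a map to which the Ergodic Formalism of \cite{Pi21} applies, extract from it the generic value of the upper Birkhoff average, and then transfer the conclusion from $J$ to its basin $\beta_f(J)$ via pull-back.

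First I would verify that $f|_J$ is a strongly transitive continuous map of the compact set $J=I_1\cup\cdots\cup I_s$: this is exactly Proposition~\ref{PropCyleIsStTr}. Since $J$ is a cycle of intervals for a continuous interval map, the periodic points of $f$ are dense in $J$ (a standard fact: on each $I_k$ the first-return map $f^s$ is a transitive interval map, hence has dense periodic points by the usual interval-dynamics argument, and these spread around the cycle). Next I would invoke Theorem~B of \cite{Pi21}: for a strongly transitive system with dense periodic orbits (so that the invariant measures supported on periodic orbits are dense in $\cm^1(f|_J)$ in the weak-$*$ topology), the upper Birkhoff average $\limsup_n \frac1n\sum_{j=0}^{n-1}\varphi\circ f^j(x)$ equals $\max\{\int\varphi\,d\mu\,;\,\mu\in\cm^1(f|_J)\}$ for a generic $x\in J$, and moreover $\omega_f(x)=\omega_f^*(x)=J$ generically on $J$. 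This yields the statement ``generically on $J$''; the $\omega_f^*(x)=J$ part in particular says the visiting frequency to every open set meeting $J$ is positive for generic $x$, which is what the statistical transitivity in Theorem~B provides.

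Then I would push the conclusion from $J$ to $\beta_f(J)$. The key observation is $\beta_f(J)\sim\co_f^-(J)$: if $\omega_f(x)\subset J$ then, because $J$ is a finite union of intervals with $f(J)\subset J$, a tail of the orbit of $x$ must actually enter $J$ unless $x$ lies in the meager ``boundary-dynamics'' set, so $x\in\co_f^-(J)$ modulo a meager set; the reverse inclusion $\co_f^-(J)\subset\beta_f(J)$ is immediate from forward invariance of $J$. Since $f$ is continuous, hence non-singular for the meager ideal, each branch $f^{-n}$ pulls meager sets to meager sets, so a property that is generic on $J$ is generic on $f^{-n}(J)$ for every $n$, hence generic on the countable union $\co_f^-(J)\sim\beta_f(J)$. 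Finally the Birkhoff average is tail-invariant: $\limsup_n\frac1n\sum_{j=0}^{n-1}\varphi\circ f^j(x)=\limsup_n\frac1n\sum_{j=0}^{n-1}\varphi\circ f^j(f^k(x))$, and likewise $\omega_f(x)=\omega_f(f^k(x))$ and $\omega_f^*(x)=\omega_f^*(f^k(x))$, so the generic identities transfer verbatim from $f^k(x)\in J$ to $x$.

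The main obstacle I anticipate is bookkeeping around the precise hypotheses of Theorem~B of \cite{Pi21} — in particular checking that strong transitivity together with dense periodic points really does place the set of periodic measures densely enough in $\cm^1(f|_J)$ for the ``$\max$'' formula to hold, and that the relevant genericity is with respect to the same (meager) $\sigma$-ideal used throughout. Once that citation is in hand, the transfer from $J$ to $\beta_f(J)$ is routine, resting only on non-singularity of $f$ and the tail-invariance of both $\omega$-limits and Birkhoff averages; the identification $\beta_f(J)\sim\co_f^-(J)$ uses only that a cycle of intervals is a neighborhood (in $J$) of each of its points together with forward invariance, so no delicate one-dimensional analysis is needed here.
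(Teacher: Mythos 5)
You identify exactly the ingredients the paper names --- Proposition~\ref{PropCyleIsStTr}, density of periodic points in a cycle of intervals, and Theorem~B of \cite{Pi21} --- and, since the paper offers no further proof (it calls the corollary a ``direct consequence'' of these), your write-up is faithful to, and more detailed than, what the paper asserts. The genericity of the conclusions on $J$ itself (strong transitivity plus dense periodic orbits feed Theorem~B) and the transfer from $J$ to $\co_f^-(J)$ by tail-invariance of $\omega_f$, $\omega_f^*$ and of upper Birkhoff averages together with non-singularity are both fine, granting for the latter that $f$ is not constant on any interval so that $f^{-1}$ preserves meagerness.

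The genuine gap is the assertion $\beta_f(J)\sim\co_f^{-}(J)$, which you dismiss as requiring ``no delicate one-dimensional analysis.'' The containment $\co_f^{-}(J)\subset\beta_f(J)$ is trivial, but the near-equality does not follow from forward invariance plus continuity. Take $J=[\tfrac14,\tfrac34]$ carrying a full tent map with $f(\tfrac14)=\tfrac14$, $f(\tfrac12)=\tfrac34$, $f(\tfrac34)=\tfrac14$, and extend $f$ continuously by $f(x)=\tfrac14+\tfrac12(x-\tfrac14)$ on $[0,\tfrac14]$ and by $f\equiv\tfrac14$ on $[\tfrac34,1]$. Then $J$ is a cycle of intervals, and every $x\in[0,\tfrac14)$ has $\omega_f(x)=\{\tfrac14\}\subset J$ while its orbit never meets $J$; thus $[0,\tfrac14)$ is an open subset of $\beta_f(J)\setminus\co_f^{-}(J)$, so $\beta_f(J)\not\sim\co_f^{-}(J)$ and ``$\omega_f(x)=J$ generically on $\beta_f(J)$'' fails. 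The missing idea is a hypothesis (or a restriction of the domain) that rules out attraction to $\partial J$ from outside $J$ --- equivalently, carving out $\BB(f)$, which is exactly what happens in the paper's actual use of this corollary inside the proof of Theorem~\ref{TheoremUdecompodition}. An orbit can accumulate on a forward-invariant compact set without ever entering it, so ``a tail of the orbit must actually enter $J$'' is not a consequence of the hypotheses you cite, and this is where your argument would fail.
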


Now we can give a quite complete description of the topological and statistical/ergodic behavior for generic orbits for non-degenerated $C^2$ maps.

\begin{proof}[\bf Proof of Theorem~\ref{TheoremUdecompodition}]
As $f$ does not have wandering interval (see \cite{MS89,MvS}) and as $\co_f^-(\per(f))$ is a countable set (since $\#\fix(f^n)<+\infty$ $\forall\,n\in\NN$), it follows from Theorem~\ref{TheoremFinTopInter} that there is a finite collection of topological attractors $A_1,\cdots,A_\ell$, $\ell\le\#\cc$ such that we get that $$[0,1]\sim\BB(f)\cup\bigcup_{j=1}^{\ell}\beta_f(A_j).$$
Each $A_j$ is either a Cantor set or a cycle of interval and $\omega_f(x)=A_j$ generically on $\beta_f(A_j)$. 
By \cite{MS89,MvS}, there exists $s_0\in\NN$ such that every  periodic point with period bigger than $s_0$ is an expanding periodic point.
Hence, $f$ can have at most $s_1=\#\big(\bigcup_{j=1}^{s_0}\fix(f^j)\big)<+\infty$ periodic attractors.
Let $\co_1,\cdots,\co_s$, $s\le s_1$, be the attracting periodic orbits of $f$.
That is, $\BB(f)=\beta_f(\co_1)\cup\cdots\cup\beta_f(\co_s)$.
Note also that $\omega_f^*(x)=\omega_f(x)$ for every $x\in\beta_f(\co_j)$ and $1\le j\le s$.
Moreover, $\nu_j=\frac{1}{\#\co_j}\sum_{p\in\co_j}\delta_p$ is the unique $f$ invariant ergodic probability $\mu$ such that $\mu(\co_j)>0$ and $\lim\frac{1}{n}\sum_{j=0}^{n-1}\varphi\circ f^j(x)=\int\varphi d\nu_j$ for every $x\in\beta_f(\co_j)$ and $\varphi\in C([0,1],\RR)$.

As $\omega_f(x)$ and $\omega_f^*(x)$ are  $f$-invariant forward subsets of $A_j$ for every $x\in\beta_f(A_j)$, if $A_j$ is a minimal set then $\omega_f(x)=\omega_f^*(x)=A_j$, $\forall\,x\in\beta_f(A_j)$, and this is the case when $A_j$ is an attracting periodic orbit or when $A_j$ is Cantor set (Corollary~2.2 \cite{BL}).
Thus, in both cases, the statistical and the topological attractor are the same, as well as $\beta_f^*(A_j)=\beta_f(A_j)$.
Moreover, if $A_j$ is a Cantor set the $A_j$ is a solenoid (see \cite{BL}), in particular, 
 $f|_{A_j}$ is uniquely ergodic.
 Hence,  $\beta_f^*(A_j)=\beta_f(\mu_j)=\beta_f(A_j)$ (\footnote{ The basin of attraction of a probability $\mu$, $\beta_f(\mu)$, is the set of all points $x\in[0,1]$ such that $\lim_n\frac{1}{n}\sum_{j=0}^{n-1}\delta_{f^j(x)}\to\mu$ in the weak* topology.}), where $\mu_j$ is the unique invariant probability for $f|_{A_j}$.
This implies that, when $A_j$ is a Cantor set or an attracting periodic orbit, the points $x\in\beta_f(A_j)$ do not have historical behavior and $\lim\frac{1}{n}\sum_{j=0}^{n-1}\varphi\circ f^j(x)=\int\varphi d\mu_j=\lambda_j=\max\{\int\varphi d\mu\,;\,\mu\in\cm^1(f)\text{ and }\mu(A_j)=1\}$.

Now, we have only to consider the remaining case, i.e., suppose that  $A_j=I_1\cup\cdots\cup I_s$ is a cycle of intervals.
A well known fact for a continuous piecewise monotone interval map $f$ is that the periodic points are dense in any cycle of intervals $J=J_1\cup\cdots\cup J_s$ (\footnote{ In general, this fact is not true when $f$ has discontinuities, even if $f$ is piecewise monotone.}).
Indeed, this fact follows from two results: (1) the topological entropy of the cycle $J$, $h_{top}(f|_{J})$, being positive (see for instance Corollary~4.6.11 in \cite{ALM} or the original proof by Block and Coven \cite{BlC}) and (2) the first return map $F$ to $J_1$ being conjugated to a continuous piecewise monotone interval map with constant slope $e^{s\,h_{top}(f)}>1$ (see for instance Corollary~4.6.12 in \cite{ALM}, this result was first proved by Parry \cite{Par}).

Hence, by Theorem~B of \cite{Pi21} and Proposition~\ref{PropCyleIsStTr}, we can conclude that generically $\omega_f^*(x)\supset A_j\cap\per(f)$.
Since $A_j=\overline{A_j\cap\per(f)}$, $\omega_f^*(x)=A_j$ generically on $\beta_f(A_j)$, proving that, also when $A_j$ is a the cycle of intervals,  the topological and the statistical attractors coincide.
Set $m=\ell+s$ and $A_{\ell+1}=\co_1,\cdots,A_{\ell+s}=\co_s$. From the fact that $\beta_f(A_1)\cup\cdots\cup\beta_f(A_{m})$ is a decomposition of $[0,1]$ up to a meager set and the fact that $\beta_f^*(A_j)\supset\beta_f(A_j)$ always, we get that $\beta_f^*(A_j)\sim\beta_f(A_j)$ for every $1\le j\le m$.
Finally, as item (6) is a consequence of Theorem~B (or Corollary~\ref{CorollaryBirkForCycle}), we conclude the proof of the theorem.
\end{proof}



\begin{thebibliography}{10}

 \bibitem[AAIS]{AAIS} Arnold, V. I.; Afrajmovich,  V. S.;  Ilyashenko, Y.; Shilnikov, L. P.. {\em  Bifurcation Theory and Catastrophe Theory}. Springer-Verlag, Berlin, Heidelberg, 1999.


\bibitem[ALM]{ALM} Alseda, L.; Llibre, J.; Misiurewicz, M.. {\em Combinatorial Dynamics and Entropy in Dimension One}, Advanced Series in Nonlinear Dynamics, Vol 5, World Scientific Publishing, ISBN 981-02-1344-1 (1993).




\bibitem[BL]{BL} Blokh, A.M. and Lyubich, M.. {\em Non-existence of wandering intervals and structure of topological attractors of one-dimensional dynamical systems II, The smooth case}. Erg. Th. and Dynam. Sys. 9, 751-758, 1989.

\bibitem[BlC]{BlC} Block, L. and Coven, E.. {\em Topological conjugacy and transitivity for a class of piecewise monotone maps of the interval}. Trans. Amer. Math. Soc. 300, 297-306, 1987.


\bibitem[Br]{Br} Brandão, P.. {\em 
Topological attractors of contracting Lorenz maps}, Annales de l'Institut Henri Poincaré C, Analyse non linéaire, Vol. 35, Issue 5, 1409-1433, 2018.

\bibitem[BPP19]{BPP19}
Brandao, P; Palis, J; Pinheiro, V.. {\em On the finiteness of attractors for piecewise C2 maps of the interval}. Erg. Theo. and Dyn. Sys.. Volume 39, Issue 7, pp. 1784-1804, 2019.

\bibitem[BPP20]{BPP20} Brandão, P. ; Palis, J.; Pinheiro, V.. {\em On the Statistical Attractors and Attracting Cantor Sets for Piecewise Smooth Maps}. Springer Proceedings in Mathematics \& Statistics. 1ed.: Springer International Publishing, v. 285, p. 31-50, 2020.

\bibitem[BPP21]{BPP21}
Brandão, P,; Palis, P.; Pinheiro, V.. {\em On the finiteness of attractors for one-dimensional maps with discontinuities}. Advances in Mathematics, 389, https://doi.org/10.1016/j.aim.2021.107891 (2021).

\bibitem[GT]{GT85} Gambaudo, J.M.; Tresser, C.: Dynamique régulière ou chaotique. {\em Applications du circle ou de l'intervalle avant uno discontinuity}. Competes Rendus Acad. Sci. Paris, Ser I. 300, 311-313, 1985.


\bibitem[Gu]{Gu} Guckenheimer, J.. {\em Sensitive Dependence to Initial Conditions for One Dimensional Maps}. Commun. Math. Phys., 70, 133-160, 1979.

\bibitem[GW]{GuWi}	Guckenheimer, J., Williams, R.F.: {\em Structural stability of Lorenz attractors}. Publications Mathématiques de L'IHÉS. 50, 59-73, 1979.



\bibitem[Ily]{Ily}
Ilyashenko, Yu. {\em Minimal attractors}, EQUADIFF 2003, 421-428, World Sci. Publ., Hackensack, NJ, 2005.

\bibitem[Ke]{KeBook} Kechris, Alexander S.. {\em Classical Descriptive Set Theory}, Graduate Texts in Mathematics, Volume 156, ISBN: 978-1-4612-8692-9, 1995. 


\bibitem[KsP]{KsP}
Keller, G.; St. Pierre, M.. {\em Topological and measurable dynamics of Lorenz maps}, Ergodic Theory, Analysis, and Efficient Simulation of Dynamical Systems, Springer Berlin Heidelberg, Berlin, Heidelberg, pp. 333-361, 2001.


\bibitem[Ly89]{Ly89} Lyubich, M.. {\em Non-existence of wandering intervals and structure of topological attractors of one dimensional dynamical systems 1. The case of negative Schwarzian derivative}. Ergodic Theory Dynamical Systems, 9, 737-750, 1989.

\bibitem[Ly91]{Ly91}
Lyubich, M.. {\em Ergodic theory for smooth one-dimensional dynamical systems}. Preprint: arXiv:math/9201286, 1991.

\bibitem[Mn]{ManeLivro} Mañe, R.. {\em Ergodic theory and differentiable dynamics}. Springer-Verlag, 1987.

\bibitem[Ma]{Ma}
Martens, M.. {\em Distortion results and invariant Cantor sets of unimodal maps}. Ergodic Theory and Dynamical Systems, 14(2), 331-349, 1994. doi:10.1017/S0143385700007902





\bibitem[MMS]{MMS}
Martens, M.; de Melo, W.; van Strien, S..{\em Julia-Fatou-Sullivan theory for real one-dimensional dynamics}. Acta Math, 168, 273-318, 1992.

\bibitem[MM]{MM}
Martens, M.; de Melo, W.. {\em Universal models for Lorenz maps}. Preprint: arXiv:math/9610222, 1996.

\bibitem[MS89]{MS89} de Melo, W.; Strien, S. van.. {\em A structure theorem in one-dimensional dynamics}. The Annals of Mathematics, 129(3), 519-546, 1989. 

\bibitem[MvS]{MvS} de Melo, W.;  Strien, S. van.. {\em One Dimensional Dynamics}, Springer-Verlag, 1993.

\bibitem[Mi]{Mi} Milnor, J.. {\em On the Concept of Attractor}. Commun. Math. Pays. 99, 177 (1985); 102, 517, 1985.

\bibitem[Par]{Par} Parry, W.. {\em Symbolic dynamics and transformations of the unit interval}. Trans. Amer. Math. Soc., 122, 368-378, 1966.

\bibitem[Pi11]{Pi11} Pinheiro,  V.. {\em Expanding measures}. Annals de l'Institut Henri Poincare (C) Non Linear Analysis 28 (6), 889-939, 2011.

\bibitem[Pi21]{Pi21} Pinheiro, V.. {\em Ergodic Formalism for topological Attractors and historic behavior}. Preprint, 2021.

\bibitem[Ru]{Ru}
Ruelle, D.. {\em Historical behaviour in smooth dynamical systems}. In B. Krauskopf, H. Broer,
and G. Vegter, editors, Global Analysis of Dynamical Systems, Festschrift dedicated to Floris Takens
for his 60th birthday, pages 63-65. Taylor \& Franci, 2001.


\bibitem[Ta95]{Ta95}
Takens, F.. {\em Heteroclinic attractors: time averages and moduli of topological conjugacy}. Bull. Braz. Math. Soc., 25: 107-120, 1995.

\bibitem[Ta08]{Ta08}
Takens, F.. {\em Orbits with historic behaviour, or non-existence of averages}. Nonlinearity, 21(3):T33, 2008.

\bibitem[Si]{Si} D. Singer. {\em  Stable orbits and bifurcations of maps of the interval}. SIAM J. Appl. Math. {\bf 35}, 260-267, 1978.

\bibitem[SP]{SP}
St Pierre, M. (1999). Topological and measrurable dynamics of Lorenz maps. Dissertationes Mathematicae, 382, 1-134.

\bibitem[vSV]{vSV} Strien, S. van;  Vargas, E.. {\em Real bounds, ergodicity and negative Schwarzian for multimodal maps}, J. Am. Math. Soc. {\bf 17}, 749-782. 2004.


\end{thebibliography}
\end{document}